 \newtheorem{theorem}{Theorem}[section]
 \newtheorem{cor}[theorem]{Corollary}
 \newtheorem{lemma}[theorem]{Lemma}
 \newtheorem{proposition}[theorem]{Proposition} \theoremstyle{definition}
 \newtheorem{definition}[theorem]{Definition}
 \theoremstyle{definition}
 \newtheorem{example}[theorem]{Example}
 \theoremstyle{remark}
 \newtheorem{rem}[theorem]{Remark}
 \numberwithin{equation}{section}
\newcommand{\ben}{\begin{equation}}
\newcommand{\een}{\end{equation}}
\newcommand{\integer}{\ensuremath{{\mathbb Z}}}
\newcommand{\field}{\ensuremath{{\mathbb F}}}
\newcommand{\complex}{\ensuremath{{\mathbb C}}}
\newcommand{\rational}{\ensuremath{{\mathbb Q}}}
\newcommand{\FF}{{\mathcal F}}
\newcommand{\CC}{\mathcal{C}}
\newcommand{\To}{\longrightarrow}
\newcommand{\tK}[1]{\ensuremath{{}^{#1} K}}
\begin{document}

\title{Stringy product on twisted orbifold K-theory for abelian quotients}

\author{Edward Becerra and Bernardo Uribe}
\thanks{Both authors acknowledge the support of COLCIENCIAS through the grant 120440520246 and of
CONACYT-COLCIENCIAS throught contract number 376-2007. The second
author was partially supported by the ``Fondo de apoyo a
investigadores jovenes" from Universidad de los Andes. }

\address{Departamento de Matem\'{a}ticas, Universidad de los Andes,
Carrera 1 N. 18A - 10, Bogot\'a, COLOMBIA}
 \email{ es.becerra75@uniandes.edu.co \\
buribe@uniandes.edu.co }
 \subjclass[2000]{Primary 14N35, 19L47 ; Secondary 55N15, 55N91}
\keywords{Stringy product, twisted orbifold K-theory, Chen-Ruan cohomology, inverse transgression map}
\begin{abstract}
In this paper we present a model to calculate the stringy product
on twisted orbifold K-theory of Adem-Ruan-Zhang for abelian complex orbifolds.

 In the first part we consider the non-twisted case on  an orbifold  presented as the quotient of a manifold acted by a
compact abelian Lie group. We give an explicit description of the obstruction bundle, we explain the relation with the
product defined by Jarvis-Kaufmann-Kimura and, via a Chern
character map, with the Chen-Ruan cohomology, and we explicitely
calculate the stringy product for a weighted
projective orbifold.

In the second part we consider orbifolds presented as the quotient of a manifold acted by a
finite abelian group and twistings coming from the group cohomology. We show a decomposition
 formula for twisted orbifold K-theory that is suited to calculate the stringy product and we use this formula
 to calculate two examples when the group is $(\integer/2)^3$.
\end{abstract}
\maketitle

% \tableofcontents

\section{Introduction}

This paper is devoted to the study of the stringy product on the twisted orbifold K-theory
 defined by Adem-Ruan-Zhang
\cite{AdemRuanZhang}.

In the first part we consider the non-twisted case on  an orbifold  presented as the quotient of a manifold acted by a
compact abelian Lie group. By means of methods used to understand the
obstruction bundle developed in \cite{ChenHu,
JarvisKaufmannKimura}, we were able to give a very explicit
description of the obstruction bundle in terms of $G$-equivariant
bundles. With this model in hand, the calculation of the
obstruction bundle becomes a simple diagonalization procedure and
a projection onto some of the components. One of the key
advantages of this construction is that everything can be
performed without the use of rational coefficients, and therefore
the torsion classes of equivariant K-theory are not disregarded.

We construct a stringy ring on the K-theory of the Borel
construction of the inertia orbifold, similar to the stringy
structure in K-theory that was defined in
\cite{JarvisKaufmannKimura}. This construction does not need
rational coefficients either, and uses the same principle of the
stringy product.

Applying a calibrated Chern character map (mainly due to
Jarvis-Kaufmann-Kimura \cite{JarvisKaufmannKimura}), we were able
to prove that there is a ring homomorphism between the stringy
ring on the K-theory of the inertia orbifold and the Chen-Ruan
cohomology of the orbifold. We come to the conclusion that the
right place to study stringy products is K-theory and not
cohomology, this due to the fact that the torsion classes in the
stringy K-theory are present.

We calculate an explicit example of the rings that we have defined
in the case of a weighted projective orbifold, and for doing so,
we make use of a decomposition of equivariant K-theory developed in
\cite{AdemRuan}

In the second part we consider orbifolds presented as the quotient of a manifold acted by a
finite abelian group and twistings coming from the group cohomology. We recall the definition
of the twisting classes via the inverse transgression map and we give a simple argument to show that these classes must
be torsion. We calculate the inverse transgression map in the case that the group is $(\integer/2)^3$ and we expand on
an example previously constructed in \cite{AdemRuanZhang}.

Then we show a decomposition formula for twisted orbifold K-theory
that can be considered as an hybrid between the decomposition
formula for equivariant K-theory using cyclotomic fields and the
decomposition for twisted orbifold K-theory (both in
\cite{AdemRuan}). We finish the paper using this formula to calculate  the stringy product in twisted orbifold
K-theory with rational coefficients when the orbifolds are $[*/(\integer/2)^3]$ and $[T^6/
(\integer/2)^3]$, and the twisting classes are non-trivial.

 It was pointed out to us by A. Adem that the decomposition formula
is not necessary to calculate the stringy product in twisted orbifold
K-theory in the case of $[*/(\integer/2)^3]$ and an explicit calculation of this ring was carried out by A. N. Duman in \cite{Duman}.

\subsection{Acknowledgements}
We would like to thank conversations with professors Alejandro
Adem, Shengda Hu, Ernesto Lupercio and Yongbin Ruan. The second
author would like to thank the hospitality of the Pacific
Institute for the Mathematical Sciences PIMS were parts of this
work were carried out. Both authors acknowledge the support of COLCIENCIAS through the grant number 120440520246.

\section{Stringy product on the K-theory of the inertia orbifold} \label{section stringy product in orbifold K-theory}

Let  $M$ be a compact manifold and $G$ a compact abelian Lie group
that acts almost freely on $M$, such that the orbifold $[M/G]$ is
endowed with an almost complex structure. In what follows we will
define a stringy product on the equivariant K-theory of the
inertia orbifold of $[M/G]$ in as much the same way as the
Chen-Ruan product in orbifold cohomology. We will show that our
definition will be equivalent to the one defined in
\cite{AdemRuanZhang} without twisting. The key ingredient of the
definition will be the obstruction bundle and its construction
will be outlined in this section.

Consider the set $\CC =\{ g \in G |   M^g \neq \phi \}$ where
$M^g$ denotes the fixed point set of the action of $g$. As $G$ is
compact abelian and the action is almost free, the set $\CC$ is
finite.

In the case that we are interested, the inertia orbifold (see
\cite{Moerdijk2002}) of $[M/G]$ is
$$I[M/G] = \sqcup_{g \in \CC} [M^g  /G] $$
where $G$ acts in the natural way.

 \begin{definition}
The stringy K-theory of the inertia orbifold $K_{st}([M/G])$, as a
$\integer$-module, is defined as the K-theory of its inertia
orbifold, i.e.
$$K_{st}^*([M/G]) := K^*(I[M/G]) \cong \bigoplus_{g \in \CC} K^*_G(M^g) \times\{g\}.$$
Note that the summands in the right hand side are in
$G$-equivariant K-theory \cite{Segal_K-theory}.
\end{definition}

The ring structure of the stringy K-theory defined in
\cite{AdemRuanZhang} uses the genus zero, three point function,
Gromov-Witten invariants approach. In this paper we will show an
equivalent description for this ring structure that has the
advantage that it is very simple to use when one wants to
calculate explicit examples.

\subsection{Obstruction bundle} \label{subsection obstruction bundle}

 Let us start by noting that as the orbifold $[M/G]$ is
almost complex, then the normal bundles of the fixed point sets
$M^g$ are all $G$-equivariant complex bundles. Now, take $g,h \in
\CC$ and consider the space $M^{g,h} := M^g \cap M^h$. As $g$ and
$h$ commute, then one can simultaneously diagonalize the action of
$g$ and $h$ on the normal bundle $V_{g,h}$ of the inclusion
$M^{g,h} \to M$.
Therefore, as the order of both $g$ and $h$ is finite,
 $V_{g,h}$ is a  complex bundle isomorphic to
$V_1 \oplus \cdots \oplus V_k$ where $V_j$ is simultaneously the
eigen bundle of $e^{2\pi i a_j}$ for the action of $g$ and of $e^{2\pi i b_j}$
for the action of $h$, with $a_j, b_j \in \rational$  and $0 \leq a_j, b_j <1$. Note
that (and this is a key fact) as the group $G$ is abelian, then
all the bundles $V_j$ are $G$-equivariant (this construction is
performed in each connected component of $M^{g,h}$).

\begin{definition} \label{Obstruction bundle}The obstruction bundle $D_{g,h}$ for the pair
$g,h$ is defined as:
$$D_{g,h} := \bigoplus_{\{j \ | \ a_j + b_j >1\}} V_j,$$
and is a $G$-equivariant bundle over $M^{g,h}$.
\end{definition}

\begin{rem} \label{remark ChenHu} If we take $k=(gh)^{-1}$ and $e^{2\pi i c_j}$ the eigenvalue of the
action of $k$ in $V_j$ ($0 \leq c_j <1$), then we have also that
$$D_{g,h} = \bigoplus_{\{j \ | \ a_j + b_j + c_j=2\}} V_j.$$ This bundle is
the obstruction bundle that is given in \cite[Prop. 1 page
65]{ChenHu} for abelian orbifolds. We are adding the fact that
this bundle is $G$-equivariant.
\end{rem}

The obstruction bundle $D_{g,h}$ defined above is equivalent to
the obstruction bundle given in \cite[Def.
1.5]{JarvisKaufmannKimura} in the case that the group $G$ is
finite. Let's recall their definition.

For $g \in \CC$ split the normal bundle $W_g$ of the inclusion
$M^g \to M$ into eigen bundles of the $g$ action. Let $W_g^l$ be
the eigen bundle of $e^{2 \pi i a_l}$ and define the bundle (in $K^0(M^g)
\otimes \rational$)
\begin{eqnarray} \label{bundles Fg}
 \FF_g := \bigoplus_l a_j W_g^l.
\end{eqnarray}
 The obstruction bundle
$B_{g,h}$ of \cite{JarvisKaufmannKimura} is
\begin{eqnarray} \label{bundle Bgh}
B_{g,h} := TM^{g,h} \ominus TM|_{M^{g,h}} \oplus \FF_g|_{M^{g,h}}
\oplus \FF_h|_{M^{g,h}}\oplus
\FF_{(gh)^{-1}}|_{M^{g,h}}.\end{eqnarray}

\begin{lemma} When the group $G$ is finite, the bundle  $D_{g,h}$ and the virtual bundle $B_{g,h}$ represent the same
class in $K^0(M^{g,h}) \otimes \rational$.
\end{lemma}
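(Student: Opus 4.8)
The plan is to compute the class of $B_{g,h}$ in $K^0(M^{g,h})\otimes\rational$ explicitly using the eigenbundle decomposition of $V_{g,h}$ established above, and to verify that it coincides with $D_{g,h}$. First I would rewrite the first two summands of $B_{g,h}$. Since $TM^{g,h}\ominus TM|_{M^{g,h}}$ is (minus) the normal bundle $V_{g,h}$ of the inclusion $M^{g,h}\to M$, we have $TM^{g,h}\ominus TM|_{M^{g,h}} = -V_{g,h} = -\bigoplus_j V_j$ in $K^0(M^{g,h})$. So the whole task reduces to showing
\[
\FF_g|_{M^{g,h}} \oplus \FF_h|_{M^{g,h}} \oplus \FF_{(gh)^{-1}}|_{M^{g,h}} \;=\; \bigoplus_j (a_j+b_j+c_j)\,V_j
\]
as a $\rational$-linear combination of the $V_j$, after which the class of $B_{g,h}$ becomes $\sum_j (a_j+b_j+c_j-1)V_j$.

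Next I would carry out the restriction of each $\FF_g$, $\FF_h$, $\FF_{(gh)^{-1}}$ to $M^{g,h}$ and match it against the eigenbundle decomposition $V_{g,h}=\bigoplus_j V_j$. The key point is that the normal bundle $W_g$ of $M^g\to M$, when restricted to the smaller set $M^{g,h}$, splits as $V_{g,h}$ together with the part of $TM^g$ that is normal to $M^{g,h}$; but the latter part is fixed by $g$ (it lies in $M^g$), so it contributes with eigenvalue $e^{2\pi i\cdot 0}$ and hence with coefficient $0$ in $\FF_g=\bigoplus_l a_l W_g^l$. Therefore only the $V_j$ with nonzero $g$-eigenvalue survive, and $\FF_g|_{M^{g,h}}=\bigoplus_j a_j V_j$ where $a_j$ is exactly the $g$-eigenvalue exponent of $V_j$. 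The same reasoning applied to $h$ gives $\FF_h|_{M^{g,h}}=\bigoplus_j b_j V_j$, and applied to $(gh)^{-1}$ gives $\FF_{(gh)^{-1}}|_{M^{g,h}}=\bigoplus_j c_j V_j$, where $c_j$ is the $(gh)^{-1}$-eigenvalue exponent of $V_j$ introduced in Remark \ref{remark ChenHu}. Summing yields the displayed identity.

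Finally, I would combine the pieces: in $K^0(M^{g,h})\otimes\rational$ we obtain
\[
B_{g,h} \;=\; \bigoplus_j (a_j+b_j+c_j-1)\,V_j.
\]
By Remark \ref{remark ChenHu}, for each $j$ the eigenvalue exponents satisfy $a_j+b_j+c_j\in\{1,2\}$ (since $g\cdot h\cdot(gh)^{-1}=1$ forces $a_j+b_j+c_j$ to be an integer, and $0\le a_j,b_j,c_j<1$ restricts it to $1$ or $2$, with the value $0$ excluded on the subbundle where the action is nontrivial and contributing trivially otherwise). Hence the coefficient $a_j+b_j+c_j-1$ equals $1$ precisely when $a_j+b_j+c_j=2$ and equals $0$ precisely when $a_j+b_j+c_j=1$. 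Thus $B_{g,h}=\bigoplus_{\{j\,:\,a_j+b_j+c_j=2\}}V_j$, which is exactly $D_{g,h}$ by the reformulation in Remark \ref{remark ChenHu}. The main obstacle I anticipate is the careful bookkeeping in the second step: one must correctly identify how $W_g$ restricts over $M^{g,h}$ and confirm that the extra directions tangent to $M^g$ but normal to $M^{g,h}$ contribute with coefficient zero, so that the $g$-eigenvalue of each $V_j$ matches the exponent $a_j$ used in the definition of $D_{g,h}$; the rest is formal manipulation in the rationalized K-theory ring.
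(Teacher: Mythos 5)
Your proof is correct and follows essentially the same route as the paper's: restrict $\FF_g$, $\FF_h$, $\FF_{(gh)^{-1}}$ to $M^{g,h}$, express everything in the eigenbundle decomposition $\bigoplus_j V_j$, and identify $B_{g,h}=\bigoplus_j(a_j+b_j+c_j-1)V_j$ with $D_{g,h}$ via the reformulation in Remark \ref{remark ChenHu}. The only difference is that you spell out two steps the paper leaves implicit (why $\FF_g|_{M^{g,h}}=\bigoplus_j a_j V_j$ despite $W_g|_{M^{g,h}}$ missing the $g$-fixed directions, and why $a_j+b_j+c_j\in\{1,2\}$), which is a welcome but not substantively different elaboration.
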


\begin{proof}
Let's write the virtual bundle $B_{g,h}$ in terms of the bundles $V_j$. By
restriction we have that $$\FF_g|_{M^{g,h}} \cong \bigoplus_j a_j
V_j, \ \ \ \ \ \FF_h|_{M^{g,h}} \cong \bigoplus_j b_j V_j \ \
\mbox{and} \  \ \FF_{(gh)^{-1}}|_{M^{g,h}} \cong \bigoplus_j c_j
V_j.
$$ Then
$$B_{g,h} \cong \bigoplus_j a_j V_j \oplus \bigoplus_j b_j V_j
\oplus \bigoplus_j c_j V_j \ominus \bigoplus_j V_j \cong
\bigoplus_j (a_j + b_j + c_j -1)V_j = D_{g,h}.
$$
\end{proof}

\subsection{Stringy Product on K-theory of the inertia orbifold}

With the obstruction bundle in hand, we can define now the stringy
product on the K-theory of the inertia orbifold. Let's denote the
inclusions by
$$e_1 : M^{g,h} \to M^g \ \ \ \ \ e_2: M^{g,h} \to M^h \ \ \ \ \ \ e_3: M^{g,h} \to M^{gh}.$$

\begin{definition} \label{stringy product}
For the almost complex orbifold $[M/G]$,  define the stringy product  $\star$ in the stringy K-theory $K_{st}^*([M/G])$
as follows:
\begin{eqnarray*}
\star : (K^*_G(M^g)\times\{g\}) \times (K^*_G(M^h) \times \{h\}) & \to & K^*_G(M^{gh})\times\{gh\} \\
((F,g),(H,h)) & \mapsto & (e_{3!}(e_1^*F \otimes e_2^*H \otimes
\lambda_{-1}(D_{g,h})),gh),
\end{eqnarray*}
where $g, h \in \CC$, the pullbacks and pushforwards are defined in the equivariant setup and
$\lambda_{-1}(D_{g,h})$ is the Euler class of the bundle $D_{g,h}$, i.e. the restriction to $M^{g,h}$ of the Thom
class of $D_{g,h}$ (see \cite[Prop. 3.2]{Segal_K-theory}).
\end{definition}

\begin{proposition}
The ring structure in $K_{st}^*([M/G])$ defined above in
definition \ref{stringy product} is the same as the ring structure
defined in Definition 7.3 of \cite{AdemRuanZhang}.
\end{proposition}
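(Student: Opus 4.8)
The plan is to show that Definition 7.3 of \cite{AdemRuanZhang}, once unwound in the untwisted case, has exactly the same shape as our Definition \ref{stringy product}: a pullback along the first two evaluation maps, a tensor product, a twist by the (Euler class of an) obstruction bundle, and a Gysin pushforward along the third evaluation map. The product of \cite{AdemRuanZhang} is built from the genus-zero three-point Gromov--Witten data of $[M/G]$, but for constant (ghost) maps all of the moduli degenerate onto the honest space $M^{g,h}$ equipped with the evaluation maps $e_1, e_2, e_3$ into the sectors $M^g, M^h, M^{gh}$, so the only genuinely nontrivial ingredient is the obstruction bundle. Thus the first step is to write Definition 7.3 out explicitly in the non-twisted case and record that, modulo the identification of the obstruction bundle and the matching of Gysin conventions, the two formulas agree term by term.

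The whole comparison then reduces to a statement in $G$-equivariant K-theory: that the obstruction class used in \cite{AdemRuanZhang} coincides with $[D_{g,h}]$. The obstruction bundle there is the Chen--Ruan obstruction bundle, namely the $H^1$ of $f^* T[M/G]$ for the universal constant map $f$ over $M^{g,h}$ from the genus-zero orbicurve whose three orbifold points carry the monodromies $g$, $h$ and $(gh)^{-1}$. Decomposing $f^* T[M/G]$ into the simultaneous eigenbundles $V_j$ and computing the $H^1$ eigen-piece by eigen-piece, the summands that survive are precisely those with $a_j + b_j + c_j = 2$, which by Remark \ref{remark ChenHu} is exactly $D_{g,h}$; this abelian computation is the one of \cite[Prop. 1 page 65]{ChenHu}. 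The one thing to add is the $G$-equivariant structure, which is automatic since $G$ is abelian and the eigenbundle splitting is therefore a splitting into $G$-subbundles, as already noted before Definition \ref{Obstruction bundle}. Hence the two obstruction classes agree in $K^0_G(M^{g,h})$ and consequently their $\lambda_{-1}$-Euler classes agree.

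Finally I would reconcile conventions so that $\lambda_{-1}(D_{g,h})$ plays the same role in both products: the complex structure on the normal bundle of $e_3 : M^{g,h} \to M^{gh}$ used to define $e_{3!}$ through the Thom isomorphism of \cite[Prop. 3.2]{Segal_K-theory} is the one induced by the almost complex structure on $[M/G]$, which is also the structure underlying the pushforward of \cite{AdemRuanZhang}. Combined with the identification of the obstruction bundle, this gives equality of the two products.

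The step I expect to be the main obstacle is precisely this bookkeeping around the obstruction bundle: making sure that the object appearing in \cite{AdemRuanZhang} is the bundle whose nonzero eigen-pieces are selected by $a_j + b_j + c_j = 2$, and not its dual or its complement inside $V_{g,h}$, and that its Euler class enters with the same normalization. Equivalently, one must check that the degree-shifting (age) convention of \cite{AdemRuanZhang} matches the one producing $D_{g,h}$, and that the orientations chosen for the three Gysin maps are mutually compatible. Once these sign-and-convention issues are pinned down, the rest is a direct comparison of the two formulas.
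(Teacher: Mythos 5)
Your proposal is correct and takes essentially the same approach as the paper: both reduce the comparison to the identification of the Adem--Ruan--Zhang obstruction bundle (which is by definition the Chen--Ruan obstruction bundle) with $D_{g,h}$, using the eigenbundle computation of \cite[Prop.~1]{ChenHu} recorded in Remark \ref{remark ChenHu}, with the $G$-equivariant structure coming for free since $G$ is abelian. The paper's own proof is simply a terser version of this argument, and the extra convention-checking you flag (ghost-map degeneration, duals, orientations of the Gysin maps) is implicitly delegated to the cited references there as well.
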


\begin{proof}
By remark \ref{remark ChenHu} above we know that in Proposition 1
of \cite{ChenHu} it is proved that the obstruction bundle of the
Chen-Ruan cohomology is isomorphic to the obstruction bundle
$D_{g,h}$. As in \cite{AdemRuanZhang} the obstruction bundle is
the obstruction bundle of the Chen-Ruan cohomology
\cite{ChenRuan}, then the two ring structures agree.
\end{proof}

\subsection{Associativity} \label{section associativity}

The fact that $K_{st}^*([M/G])$ together with the stringy product in
K-theory is associative is proved in Theorem 7.4
\cite{AdemRuanZhang}.
 It is basically the same proof that is given in lemma 5.4 of \cite{JarvisKaufmannKimura} but in the equivariant setup.
 We are not going to reproduce the proof in here because it would be redundant. Instead, we will write some words on the
 key ingredient of the proof which is the equivariant version of the excess intersection formula of Quillen.

 In Proposition 3.3 of \cite{Quillen} it is shown that for the inclusion of almost complex manifolds
 $$\xymatrix{
 S_1 \ar[r]^{i_1} & S \\
 V \ar[r]^{j_2} \ar[u]^{j_1} & S_2 \ar[u]_{i_2}
 }$$
 with $V=S_1 \cap S_2$, then $$i_2^* i_{1*} x = j_{2*}( e(E(S, S_1, S_2)) \cdot j_1^*x)$$ holds in cohomology where
 $e(E(S, S_1, S_2)$ is the Euler class of the bundle
 $$ E(S, S_1, S_2) = TS|_V \oplus TV \ominus TS_1|_V \ominus TS_2|_V.$$
 This bundle is known as the excess intersection bundle.

  The formula also holds in K-theory, namely
 $$i_2^* i_{1!} F = j_{2!}( \lambda_{-1} (E(S, S_1, S_2)) \otimes j_1^*F).$$
Both formulas rely on the fact that for a complex bundle $E \to V$
with zero section $\sigma: V \to E$, one has $\sigma^* \sigma_* x
= e(E) \cdot x$ in cohomology and $\sigma^* \sigma_! F =
\lambda_{-1}(E) \otimes F$ in K-theory.

In the case that the bundle $E \to V$ is equivariant, Segal
 showed that the formula $\sigma^* \sigma_! F
= \lambda_{-1}(E) \otimes F$ also applies to the equivariant setup
\cite[Page 140]{Segal_K-theory}; therefore the excess intersection
formula of Quillen holds in the case that the manifolds $S, S_1,
S_2,V$ and the maps $i_i, j_1, i_2, j_2$  are all equivariant.

With the equivariant excess intersection formula in hand, the
associativity boils down to check the following equality in
$K_G^0(N)$ where $N=M^{g,h,k}$:
$$D_{g,h}|_N \oplus D_{gh,k}|_N \oplus E(M^{gh},M^{g,h}, M^{gh,k}) = D_{g,hk}|_N \oplus
D_{h,k}|_N \oplus E(M^{hk}, M^{g,hk}, M^{h,k}).$$ The proof
follows the same lines as the proof of lemma 5.4 of
\cite{JarvisKaufmannKimura}; we only need to remark that in our
case all the operations are done in the $G$-equivariant setup. We
will not reproduce the proof.

\subsection{Weighted projective orbifold $\complex P[p:1: \dots :1]$}
\label{section example} In this section we are going to calculate
the stringy K-theory in the case of the weighted projective
orbifold $\complex P[p:1: \dots :1]$. Let's recall its definition.

   Let $M=S^{2n+1} = \{
(z_0, \dots, z_n) \in \complex^{n+1} | \sum_i |z_i|^2 =1 \}$ and
$G =S^1$ acting on $M$ as
\begin{eqnarray}
\nonumber S^1 \times S^{2n+1} & \to & S^{2n+1} \\
(\lambda, (z_0, \dots, z_n)) & \mapsto & (\lambda^p z_0, \lambda
z_1, \lambda z_2, \dots, \lambda z_n). \label{action of the group}
\end{eqnarray}
with $p$ a prime number. The quotient space $M/G$ is the weighted
projective space $\complex P (p, 1, \dots, 1)$ and the orbifold
 $[M/G]$ is the weighted projective orbifold $ \complex P[p:1: \dots :1]$.

The set $\CC$ is equal to the elements in the group $\integer/p
\subset G$ which we will denote by
 $\{ g_0=id, g_1, \dots, g_{p-1} \}=\CC$. The fixed point sets are $M^{g_0} =M$
and $M^{g_j} = \{ (z_0, 0, \dots, 0) \in M\} = S^1$ for $1\leq j <
p$. The $G$ action on $M^{g_j}$ is the same as the action of $G$
on the space $S^1/(\integer/p)$ given by rotation. Then as a
$\integer$-module
$$K_{st}^*([M/G]) = K_{G}^*(S^{2n+1})\times \{g_0\} \oplus \bigoplus_{j=1}^{p-1} K_{G}^*(S^1/(\integer/p)) \times \{g_j\}.$$

As the action of $G$ in $ M^{g_j} \cong S^1/(\integer/p)$ is
by rotation, then we know that
$$K_G^*(S^1/(\integer /p)) \cong K_{\integer /p}^* (pt) \cong
R(\integer /p),$$ where $R(\integer /p)$ is the Grothendieck ring
of $\integer /p$ representations. Therefore, the stringy K-theory,
as a $\integer$-module is isomorphic to

$$K_{st}^*([M/G]) = K_{G}^*(S^{2n+1})\times \{g_0\} \oplus \bigoplus_{j=1}^{p-1} R(\integer/p) \times \{g_j\}.$$

For $g_j, g_k \in \CC$, $j \neq 0$,  the obstruction bundle
$D_{j,k} :=D_{g_j,g_k}$ is a subbundle of the normal bundle
$V_{j,k}$ of the inclusion $S^1 \to S^{2n+1}$, where in this case
$M^{g_j,g_k} = M^{g_j}=S^1$. The normal bundle $V_{j,k}$ is isomorphic to the
trivial bundle $S^1 \times \complex^n$ over $S^1$, where the
circle action is given by $\lambda (\eta, {\bf z}) = (\lambda^p
\eta, \lambda {\bf z})$. This follows from the fact that one can take the set
$$\{(z_0, \dots, z_n ) \in S^{2n+1} \ | \ |z_i| < \frac{1}{n} \ \mbox{for} \ 1 \leq i \leq n \}$$ as tubular neighborhood
of the inclusion  $S^1 \to S^{2n+1}$, and this set is isomorphic to
$$S^1 \times \{(z_1, \dots, z_n) \in \complex^n \ | \ |z_i|< \frac{1}{n} \}.$$
 Then the normal bundle can be seen as $nW
\in R(\integer/p) \cong K^*_{S^1}(M^{g_j,g_k})$ where $W$ is the
one dimensional irreducible representation whose character is
$e^{\frac{2 \pi i}{p}}$.

Going back to the obstruction bundle, we have that
\begin{eqnarray} \label{obstruction bundle of weighted} D_{j,k} = \left\{ \begin{array}{ccc}
V_{j,k} & \mbox{for} & j + k > p \\
0 & \mbox{for} & j+k \leq p;
\end{array}  \right.\end{eqnarray}
this is because the action of $g_j$ on the normal bundle $V_{j,k}$
is given by multiplying by $e^{\frac{2 \pi i}{p}j}$.

Having defined the obstruction bundle, we need to explicitly
determine the pullbacks and pushforwards of the inclusion maps.
The first step is to calculate explicitly $K_G^*(M)$, and for this
we are going to use a decomposition theorem for orbifold K-theory
due to Adem and Ruan \cite{AdemRuan} (cf. \cite{LueckOliver})

\begin{theorem}\cite[Th. 5.1]{AdemRuan} \label{theorem AdemRuan}
Let $[M/G]$ be a compact orbifold where $G$ is a compact Lie group
acting almost freely on $M$. Then there is a rational isomorphism
of rings
$$K_G^*(M) \otimes \rational \cong \prod_{\{(C) \ | \ C \subset G \
cyclic\}} \left[K^*(M^C/Z_G(C)) \otimes \rational (\zeta_{|C|})
\right]^{W_G(C)}$$ where $(C)$ ranges over conjugacy classes of
cyclic subgroups, $W_G(C)=N_G(C)/Z_G(C)$ and $\rational
(\xi_{|C|})$ is the cyclotomic field where $\zeta_{|C|}$ is a
primitive $|C|-th$ root of unity.
\end{theorem}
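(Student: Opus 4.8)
Since this is Theorem 5.1 of \cite{AdemRuan}, I would follow the localization philosophy for equivariant K-theory. The starting point is that $K_G^*(M)$ is a module, indeed an algebra, over the representation ring $R(G)$, so after tensoring with $\rational$ its structure is governed by the geometry of $\mathrm{Spec}(R(G)\otimes\rational)$. The plan is to (i) decompose this spectrum according to conjugacy classes of cyclic subgroups, (ii) use the almost-free hypothesis to turn the resulting local decomposition into a global product, (iii) identify each factor by a localization theorem, and (iv) descend from generators to subgroups so as to produce the cyclotomic coefficients and Weyl-group invariants.

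First I would recall the character-theoretic description of $R(G)$: a homomorphism $R(G)\to\complex$ is evaluation of characters at a conjugacy class of elements of $G$, and rationally the Galois-conjugate evaluations are identified. Two elements become Galois conjugate exactly when they generate the same cyclic subgroup, and the generators of a fixed cyclic $C$ form a single orbit under $\mathrm{Gal}(\rational(\zeta_{|C|})/\rational)$. Consequently the primitive idempotents of $R(G)\otimes\rational$ are indexed by conjugacy classes $(C)$ of (topologically) cyclic subgroups, and the residue field attached to $C$ is the cyclotomic field $\rational(\zeta_{|C|})$ before one passes to invariants.

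Next I would bring in the almost-free hypothesis. Since all isotropy groups are finite, the support of $K_G^*(M)\otimes\rational$ meets only those primes whose associated cyclic $C$ has $M^C\neq\emptyset$, and there are finitely many such conjugacy classes. Therefore the idempotents $e_C$ furnish a finite orthogonal decomposition of the identity, and localization becomes a genuine splitting of rings $K_G^*(M)\otimes\rational\cong\prod_{(C)} e_C\cdot\bigl(K_G^*(M)\otimes\rational\bigr)$. To identify the $C$-component I would apply the Atiyah--Segal localization theorem at a generator $g$ of $C$: this identifies $e_C\cdot(K_G^*(M)\otimes\rational)$ with the K-theory of the fixed set $M^{\langle g\rangle}=M^C$, equivariant for the residual group $Z_G(C)$ acting on $M^C$, with coefficients recording the $C$-weights. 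Passing from the single generator $g$ to the whole subgroup $C$ is Galois descent along $\mathrm{Gal}(\rational(\zeta_{|C|})/\rational)$, which reassembles the generator-indexed pieces into one factor with coefficient field $\rational(\zeta_{|C|})$. Finally, since I have chosen one representative per conjugacy class, the ambient normalizer $N_G(C)$ must be divided out: it acts on $M^C$ through $Z_G(C)$ and permutes the remaining data through $W_G(C)=N_G(C)/Z_G(C)$, so the invariant factor is $\bigl[K^*(M^C/Z_G(C))\otimes\rational(\zeta_{|C|})\bigr]^{W_G(C)}$.

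The main obstacle is steps (iii)--(iv): making the localization theorem yield exactly the cyclotomic coefficient field while keeping the two group actions separate and correct --- the $Z_G(C)$-action that defines the equivariant K-theory of $M^C$, and the residual $W_G(C)$-action under which one takes invariants --- and then checking that the assembled isomorphism is multiplicative rather than merely additive. The ring statement beyond the module statement is the easier part: the idempotents $e_C$ are central and the localization maps are ring homomorphisms, so compatibility with products follows once the summand identifications have been fixed.
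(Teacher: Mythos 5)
The first thing to note is that the paper does not prove this statement at all: it is imported wholesale from \cite[Th.~5.1]{AdemRuan}, and what the text supplies afterwards is only the explicit description of the isomorphism (the restriction $\phi$, the isotypical decomposition $\psi$, the Borel-construction map $\alpha$, and the map $\beta$), because the later computations (Lemmas \ref{equiv of M} and \ref{lemma fixed point equiv}) need the map rather than the proof. Measured against the localization argument you are trying to reconstruct, your steps (i)--(ii) contain a genuine error. You assert that the primitive idempotents of $R(G)\otimes\rational$ are indexed by conjugacy classes of cyclic subgroups and that these $e_C$ split $K_G^*(M)\otimes\rational$. This is true for finite $G$ but false for a compact Lie group of positive dimension, which is the actual generality of the theorem and the very case in which the paper uses it ($G=S^1$ acting on $S^{2n+1}$): $R(S^1)\otimes\rational\cong\rational[t,t^{-1}]$ is an integral domain, so it has no nontrivial idempotents whatsoever, while $S^1$ has infinitely many cyclic subgroups $\integer/n$. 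The product decomposition cannot come from idempotents of the coefficient ring; it comes from the module. Almost-freeness forces all isotropy groups to be finite, so by Segal's localization theorem the finitely generated $R(G)$-module $K_G^*(M)\otimes\rational$ is supported on the finitely many closed points of $\mathrm{Spec}(R(G)\otimes\rational)$ attached to the conjugacy classes of finite cyclic subgroups $C$ with $M^C\neq\emptyset$, and a finitely generated module supported on finitely many maximal ideals is the product of its localizations (equivalently, $R(G)\otimes\rational$ modulo the annihilator is Artinian, and that quotient is where the idempotents actually live). Your sketch states the support condition correctly but then folds it back into idempotents of $R(G)\otimes\rational$ that do not exist.

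There is a second gap in your steps (iii)--(iv): localization identifies the piece at $(C)$ with (a localization of) the $Z_G(C)$-\emph{equivariant} K-theory of $M^C$, but the factor in the theorem is the ordinary K-theory of the quotient $M^C/Z_G(C)$. These are not the same group, and passing from one to the other is a second, independent use of the almost-free hypothesis; this is precisely the role of the maps $\alpha$ and $\beta$ in the paper's description of the isomorphism: one maps $K^*_{Z_G(C)}(M^C)$ to the K-theory of the Borel construction $EG\times_{Z_G(C)}M^C$ and then uses that the fibers of $EG\times_{Z_G(C)}M^C\to M^C/Z_G(C)$ are rationally acyclic (because the isotropy is finite) to land in $K^*(M^C/Z_G(C))\otimes\rational$, while the ring map $R(C)\otimes\rational\to\rational(\zeta_{|C|})$ supplies the cyclotomic coefficients. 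Your write-up silently substitutes the quotient for the equivariant theory. With these two repairs your outline becomes the standard argument; it is worth noting that the decomposition theorem the paper does prove itself (Theorem \ref{decomposition formula}, the twisted analogue) is established by a different route entirely --- verification on orbits $X=G/H$ followed by induction on the number of $G$-cells via Mayer--Vietoris --- which sidesteps all Spec-level considerations.
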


For a cyclic group $C$, the map from left to right is the
composition of maps $$K_G^*(M) \stackrel{\phi}{\To}
K_{N_G(C)}^*(M^C) \stackrel{\psi}{\To}  K_{Z_G(C)}^*(M^C) \otimes
R(C) \To \ \ \ \ \ \ \ \ \ \ \ \ \ \ \ $$
$$ \ \ \ \ \ \ \ \ \stackrel{\alpha}{\To} K^*(EG \times_{Z_G(C)}  M^C) \otimes R(C)
\stackrel{\beta}{\To} K^*(M^C/Z_G(C)) \otimes \rational
(\zeta_{|C|})$$ where: $\phi$ is the restriction map; $\psi$ is
defined such that for any $N_G(C)$ vector bundle $E \to M^C$
$$\psi([E]) = \sum_{V \in Irr(C) } [Hom_C(V,E)] \otimes [V];$$
$\alpha$ is the natural map from equivariant K-theory to the
K-theory of the Borel construction; and $\beta$ is the map
obtained from the isomorphism
$$K^*(EG \times_{Z_G(C)} M^C) \otimes \rational \cong
K^*(M^C/Z_{G(C)}) \otimes \rational$$ that comes from the rational
acyclicity of the fibers of the map $EG \times_{Z_G(C)} M^C \to
M^C/Z_{G(C)}$, and the ring map $R(C) \otimes \rational \to
\rational(\zeta_{|C|})$ whose kernel is the ideal of elements
whose characters vanish in all generators of $C$.

\begin{rem}
We have modified slightly the statement of Theorem 5.1 in \cite{AdemRuan} by leaving the decomposition
formula only in terms of K-theory. Applying the Chern character isomorphism  $$Ch: K^*(M^C/Z_{G(C)}) \otimes \rational \stackrel{\cong}{\To}
H^*(M^C/Z_{G(C)})\otimes \rational$$ one recovers the original statement.
\end{rem}

Going back to the weighted projective orbifold $M/G$, we have

\begin{lemma} \label{equiv of M}
Let $G=S^1$ act on $M = S^{2n+1}$ as in (\ref{action of the
group}), then there is an isomorphism of rings
$$K^*_G(M) \otimes \rational \cong \rational[u]/\langle
(1-u)^{n+1} \rangle \times \rational (\zeta)$$ where
$\zeta=e^{\frac{2 \pi i}{p}}$.
\end{lemma}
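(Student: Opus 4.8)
The plan is to feed the action into the Adem--Ruan decomposition of Theorem~\ref{theorem AdemRuan}. Since $G=S^1$ is abelian, for every cyclic subgroup $C\subset G$ one has $N_G(C)=Z_G(C)=G$, so $W_G(C)$ is trivial and no invariants survive; the formula collapses to a product $\prod_{(C)}K^*(M^C/S^1)\otimes\rational(\zeta_{|C|})$ over those cyclic subgroups $C$ with $M^C\neq\emptyset$ (all other factors vanish). My first step is therefore to decide which subgroups contribute.

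First I would run a fixed-point analysis. The cyclic subgroups of $S^1$ are the groups $\integer/m$ of $m$-th roots of unity. If $\lambda$ generates $\integer/m$ with $m>1$, then $\lambda z_i=z_i$ forces $z_i=0$ for $1\le i\le n$, whence $|z_0|=1$ and $\lambda^p z_0=z_0$, i.e.\ $m\mid p$. As $p$ is prime only $C=\{1\}$ and $C=\integer/p$ remain. For $C=\{1\}$ the factor is $K^*(S^{2n+1}/S^1)\otimes\rational=K^*(\complex P[p:1:\dots:1])\otimes\rational$ (here $\zeta_1=1$); for $C=\integer/p$ the fixed set $M^{\integer/p}=\{(z_0,0,\dots,0)\}\cong S^1$ carries a transitive $S^1$-action $\lambda\cdot z_0=\lambda^p z_0$, so $M^{\integer/p}/S^1=pt$ and the factor is simply $\rational(\zeta)$. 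Thus Theorem~\ref{theorem AdemRuan} already pins down the second summand and reduces the lemma to identifying the ring $K^*(\complex P[p:1:\dots:1])\otimes\rational$.

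That ring structure is the one real obstacle, and I would clear it — and in fact reprove the whole statement with integral coefficients — by a direct Gysin computation that sidesteps Theorem~\ref{theorem AdemRuan}. Writing $M=S(V)$ for the unit sphere of the $S^1$-representation $V=\complex_{(p)}\oplus\complex_{(1)}^{\oplus n}$, where $\complex_{(w)}$ has weight $w$, the long exact sequence of the pair $(D(V),S(V))$ together with the Thom isomorphism gives
$$R(S^1)\stackrel{\cdot\lambda_{-1}(V)}{\To}R(S^1)\To K_G^0(M)\To 0,$$
with $K_G^1(M)=0$, because the K-theoretic Euler class $\lambda_{-1}(V)=(1-t^p)(1-t)^n$ is a non-zero-divisor in $R(S^1)=\integer[t,t^{-1}]$ ($t$ the standard weight-one representation). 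Hence $K_G^0(M)\cong\integer[t,t^{-1}]/\langle(1-t^p)(1-t)^n\rangle$.

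Finally I would split this quotient rationally. Using $1-t^p=(1-t)(1+t+\cdots+t^{p-1})$, the relation factors as $(1-t)^{n+1}\Phi_p(t)$ with $\Phi_p(t)=1+t+\cdots+t^{p-1}$. Since $\Phi_p(1)=p\neq 0$, the factors $(1-t)^{n+1}$ and $\Phi_p(t)$ are coprime in $\rational[t]$, and (as the relation has unit constant term, the passage from $\integer[t,t^{-1}]$ to $\integer[t]$ is harmless) the Chinese Remainder Theorem yields
$$K_G^0(M)\otimes\rational\cong\rational[u]/\langle(1-u)^{n+1}\rangle\times\rational[t]/\langle\Phi_p(t)\rangle\cong\rational[u]/\langle(1-u)^{n+1}\rangle\times\rational(\zeta),$$
the last identification because $\Phi_p$ is the minimal polynomial of $\zeta=e^{2\pi i/p}$. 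This reproduces the two factors predicted above and proves the lemma; the crux throughout is recognizing that the single relation $(1-t^p)(1-t)^n$ encodes both the weighted-projective factor and, through the cyclotomic divisor $\Phi_p$, the summand $\rational(\zeta)$.
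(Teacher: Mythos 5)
Your proof is correct, and its core takes a genuinely different route from the paper's. The paper also begins with Theorem \ref{theorem AdemRuan} and the same fixed-point analysis (only $C=\{1\}$ and $C=\integer/p$ contribute, with $M^{\integer/p}/G$ a point), but it then identifies the remaining factor $K^*(\complex P(p{:}1{:}\dots{:}1))\otimes\rational$ geometrically: it smooths the single orbifold point, compares with $\complex P^{n-1}$ via a short exact sequence in cohomology, pins down the ring $H^*(\complex P^n_w)\cong\integer[x,v]/\langle x^n-pv,\,x^{n+1},\,v^2\rangle$ by an intersection-theoretic argument, and finally applies the Chern character isomorphism. You instead compute $K_G^*(M)$ directly and integrally from the Gysin sequence of the representation sphere $S(V)$, $V=\complex_{(p)}\oplus\complex_{(1)}^{\oplus n}$, obtaining $K_G^0(M)\cong\integer[t,t^{-1}]/\langle(1-t^p)(1-t)^n\rangle$ and $K_G^1(M)=0$ (the Euler class is a non-zero-divisor in the domain $R(S^1)$), and then split rationally by the Chinese Remainder Theorem via $(1-t^p)(1-t)^n=(1-t)^{n+1}\Phi_p(t)$; the coprimality and constant-term points are handled correctly. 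Your route is self-contained — Theorem \ref{theorem AdemRuan} becomes logically unnecessary, since Gysin plus CRT proves the lemma on its own — and it is stronger, giving the integral ring before tensoring, which is consonant with the paper's stated interest in not discarding torsion. What the paper's route buys is the geometric identification of the two factors with $K^*(M/G)\otimes\rational$ and $K^*(M^{\integer/p}/G)\otimes\rational(\zeta)$, which is precisely what the subsequent lemmas on $i^*$ and $i_!$ exploit (the first coordinate of $i_!$ is read as the pushforward of a point into the weighted projective space); if one adopts your abstract CRT splitting, one should add the easy remark that the factor $\rational[t]/\langle(1-t)^{n+1}\rangle$ corresponds to the trivial-subgroup component of the Adem--Ruan decomposition and $\rational[t]/\langle\Phi_p(t)\rangle$ to the $\integer/p$-component, so those later computations can be carried out in your coordinates. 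One phrasing nit: ``no invariants survive'' should rather say that taking $W_G(C)$-invariants is the identity, since $W_G(C)$ is trivial.
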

\begin{proof}
From theorem \ref{theorem AdemRuan} we have that
$$K_G^*(M)\otimes \rational \cong \left(K^*(M/G) \otimes \rational \right) \times \left( K^*(M^{\integer/p}/G) \otimes \rational(\zeta)
\right)$$ because the only cyclic subgroups of the circle that have non empty fixed point set are
the trivial subgroup and $\integer/p$.  The quotient spaces are: $M/G = \complex P(p:1:
\dots :1)$ and $M^{\integer/p}/G = point$, therefore
$$K_G^*(M)\otimes \rational \cong \left(K^*(\complex P(p:1: \dots :1)) \otimes \rational \right) \times
\rational(\zeta).$$ We are left to calculate the K-theory  of the
weighted projective space $\complex P^n_w:=\complex P(p:1: \dots
:1)$; this we will do by calculating its cohomology.

The space $\complex P^n_w$ has only one orbifold point whose
neighborhood looks like the quotient of $\complex^n$ by the
diagonal action of $\integer /p$. Then the space $\complex P^n_w$
can be thought as a smooth manifold because the orbifold point can
be removed.

If we consider the inclusion $j: \complex P^{n-1} \to \complex
P^n_w$ that skips the first variable, then we get the following
short exact sequence:
$$0 \To H^{2n}(\complex P^n_w, \complex P^{n-1}) \To H^*( \complex
P^n_w) \stackrel{j^*}{\To} H^*(\complex P^{n-1}) \To 0.$$

As $H^*(\complex P^{n-1})= \integer[y]/ \langle y^n \rangle$ then
there must exist $x \in H^2(\complex P^n_w)$ with $j^*x = y$. As
the transversal intersection of the submanifold $j(\complex
P^{n-1})$ with itself $n-1$ times  inside $\complex P^n_w$ gives
$p$ points (this is because the normal bundle of the inclusion $j$
is isomorphic to the $p$-th power of the canonical line bundle
over $\complex P^{n-1}$), then $x^n$ must be $p$-times the image
of the generator of $H^{2n}(\complex P^n_w, \complex P^{n-1})$.

Therefore we can conclude that
$$H^*( \complex P^n_w) \cong \integer[x,v]/ \langle x^n-pv,
x^{n+1}, v^2 \rangle,$$ with rational coefficients and taking $z:=
p^{-\frac{1}{n}} x$
$$H^*( \complex P^n_w, \rational) \cong \rational[z]/ \langle z^{n+1}
\rangle,$$ and by the Chern character isomorphism
$$K^*(\complex P^n_w) \otimes \rational \cong \rational[u]/ \langle (1-u)^{n+1}
\rangle$$ where $ch(1-u)^n=z^n$.
\end{proof}

For the fixed point set $M^{\integer /p}$ we already know that
$K_G(M^{\integer /p}) \cong R(\integer /p) \cong \integer [W] /
\langle W^p -1 \rangle$. Applying again theorem \ref{theorem
AdemRuan} we have that
\begin{lemma} \label{lemma fixed point equiv}
There is a ring isomorphism
\begin{eqnarray*}
R(\integer /p) \otimes \rational & \cong & \rational \otimes
\rational(\zeta) \\
f(W) & \mapsto & ( f(1),f(\zeta))
\end{eqnarray*}
where $f(W)$ is a polynomial that belongs to $\integer [W] /
\langle W^p -1 \rangle$ and $W$ is the one dimensional irreducible
representation of $\integer /p$ whose character on $ 1 \in
\integer /p$ is $e^{\frac{2\pi i }{p}}$.
\end{lemma}

\begin{lemma}
The pullback of the inclusion map $i : M^{\integer /p} \to M$
$$i^* : K_G^*(M) \otimes \rational  \to  K_G^*(M^{\integer /p})
\otimes \rational$$ is isomorphic to
\begin{eqnarray}
 \label{pullback}
\rational[u]/\langle (1-u)^{n+1}\rangle \times \rational(\zeta) & \to & \rational \times \rational(\zeta) \\
(p(u), q(\zeta)) & \mapsto & (p(1),q(\zeta)). \nonumber
\end{eqnarray}
\end{lemma}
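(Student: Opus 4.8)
The plan is to exploit the naturality of the Adem--Ruan decomposition of Theorem \ref{theorem AdemRuan} with respect to the $G$-equivariant inclusion $i : M^{\integer/p} \to M$, using the ring identifications already fixed in Lemmas \ref{equiv of M} and \ref{lemma fixed point equiv} for the source and target. Both $M$ and $M^{\integer/p}$ contribute only the two cyclic subgroups $C = \{1\}$ and $C = \integer/p$: since $p$ is prime, these are the only cyclic subgroups of $S^1$ with nonempty fixed locus, exactly as used in the proofs of those two lemmas. Each of the four maps $\phi, \psi, \alpha, \beta$ building the decomposition is a natural transformation of functors on $G$-spaces ($\phi$ is restriction to the $C$-fixed locus, $\psi$ the $\mathrm{Hom}_C$-isotypical splitting, $\alpha$ the map to the Borel construction, $\beta$ the cyclotomic projection). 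Consequently $i^*$ respects the two decompositions and acts componentwise in $C$; the relevant commuting square records that restriction from $M$ to $M^C$ followed by restriction to $(M^{\integer/p})^C = M^C \cap M^{\integer/p}$ coincides with $i^*$ followed by restriction to $(M^{\integer/p})^C$, both being restriction from $M$ to $M^C \cap M^{\integer/p}$.

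First I would treat the component $C = \{1\}$. Here $\phi$ is the identity, $R(\{1\}) = \integer$ and $\zeta_1 = 1$, so this component of the decomposition is simply the map $K_G^*(-)\otimes\rational \To K^*(-/G)\otimes\rational$ to the rational K-theory of the quotient. Applying naturality to $i$, it becomes the pullback along the inclusion of quotients $\overline{i} : M^{\integer/p}/G = \mathrm{pt} \hookrightarrow M/G = \complex P^n_w$, that is, the restriction of $K^*(\complex P^n_w)\otimes\rational \cong \rational[u]/\langle (1-u)^{n+1}\rangle$ to a point. Since the class $1-u$ satisfies $ch(1-u)^n = z^n$ and hence has vanishing degree-zero part, the generator $u$ has rank one, and its restriction to a point equals its rank, namely $1 \in K^0(\mathrm{pt})\otimes\rational = \rational$. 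Thus $p(u) \mapsto p(1)$, which is the first coordinate of (\ref{pullback}).

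Next I would treat the component $C = \integer/p$. The relevant fixed loci are $M^{\integer/p}$ on the source and $(M^{\integer/p})^{\integer/p} = M^{\integer/p}$ on the target, so the restriction map between them is the identity, and both quotients $M^{\integer/p}/G$ and $(M^{\integer/p})^{\integer/p}/G$ are a point. As $\beta$ is the same cyclotomic projection on both sides, tensoring with $\rational(\zeta)$ shows that this component of $i^*$ is the identity of $\rational(\zeta)$, so $q(\zeta)\mapsto q(\zeta)$, the second coordinate of (\ref{pullback}). Assembling the two components yields the asserted map $(p(u), q(\zeta)) \mapsto (p(1), q(\zeta))$.

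The main obstacle is the bookkeeping in the first paragraph: one must verify that $\psi$ and $\beta$ genuinely commute with restriction to a fixed subspace, so that the abstract isomorphism of Theorem \ref{theorem AdemRuan} is promoted to a natural one. This is where I would spend the most care, checking that the isotypical assignment $E \mapsto \sum_V [\mathrm{Hom}_C(V,E)]\otimes[V]$ and the ring map $R(C)\otimes\rational \To \rational(\zeta_{|C|})$ are compatible with pulling back along the equivariant inclusion. Once this naturality is in place, the remaining ingredients---the rank-one computation identifying $u\mapsto 1$ and the triviality of the $C = \integer/p$ restriction---are immediate.
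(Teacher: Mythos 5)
Your proposal is correct and follows essentially the same route as the paper, whose proof is the single line that the lemma ``follows directly'' from Lemmas \ref{lemma fixed point equiv} and \ref{equiv of M}: implicitly, exactly the naturality of the Adem--Ruan decomposition under the equivariant inclusion and the componentwise identification that you spell out. Your write-up simply makes explicit the details the paper suppresses (naturality of $\phi,\psi,\alpha,\beta$, and the rank-one argument showing $u\mapsto 1$ on the trivial-subgroup component), which is a faithful expansion rather than a different argument.
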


\begin{proof}
It follows directly from lemmas \ref{lemma fixed point equiv} and
\ref{equiv of M}.
\end{proof}
 The pushforward $i_!$ of the inclusion follows the same lines.
 \begin{lemma}
The pushforward
\begin{eqnarray*}
i_! : K_G^*(M^{\integer /p}) \otimes \rational &\to& K_G^*(M) \otimes
\rational
\end{eqnarray*}
is isomorphic to
\begin{eqnarray*}
i_! : \rational \times \rational(\zeta)  &\to&  \rational[u]/\langle (1-u)^{n+1}\rangle \times \rational(\zeta)  \\
(r, q(\zeta)) & \mapsto & (r(1-u)^n, q(\zeta)(1-\zeta)^n)
\end{eqnarray*}
\end{lemma}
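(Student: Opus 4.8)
The plan is to reduce the whole computation to the single value $i_!(1)$ and then determine its two components separately. First I observe that the pullback formula \eqref{pullback} shows $i^*$ to be surjective: $(p(u),q(\zeta))$ is sent to $(p(1),q(\zeta))$, and $p(1)$ ranges over all of $\rational$ while $q(\zeta)$ ranges over all of $\rational(\zeta)$. Hence the unit $1=(1,1)$ generates $K_G^*(M^{\integer/p})\otimes\rational$ as a module over $K_G^*(M)\otimes\rational$ through $i^*$. Applying the projection formula $i_!(i^*y\cdot x)=y\cdot i_!(x)$ with $x=1$, for any source class $x=(r,q(\zeta))$ I choose the constant representative $y=(r,q(\zeta))$, for which $i^*y=x$, and obtain $i_!(x)=y\cdot i_!(1)$. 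Thus the asserted formula follows immediately from the single claim $i_!(1)=((1-u)^n,(1-\zeta)^n)$, and everything reduces to computing the pushforward of the unit.

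For the second ($\rational(\zeta)$) component I would use the self-intersection formula $i^*i_!(x)=\lambda_{-1}(N)\otimes x$, valid in the equivariant setting by Segal as recalled in Section~\ref{section associativity}, where $N$ is the normal bundle of $i$. As recorded earlier in this section, $N\cong nW$ in $K_G^*(M^{\integer/p})\cong R(\integer/p)$, so $\lambda_{-1}(N)=(1-W)^n$, which under the isomorphism of Lemma~\ref{lemma fixed point equiv} maps to $(0,(1-\zeta)^n)$. Since $i^*$ restricts to the identity on the $\rational(\zeta)$ factor, the second component of $i_!(1)$ must equal $(1-\zeta)^n$. Note, however, that the first component of $i^*i_!(1)$ is $0$, so the self-intersection formula only forces the first component $P(u)$ of $i_!(1)$ to satisfy $P(1)=0$ and does not determine it.

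To pin down the first component I would identify it with an ordinary Gysin pushforward on the coarse spaces. Under the trivial-cyclic-subgroup part of the Adem--Ruan decomposition of Theorem~\ref{theorem AdemRuan}, the equivariant map $i_!$ corresponds, by naturality of the Borel construction and the rational acyclicity used there, to the pushforward $\bar{i}_!\colon K^*(M^{\integer/p}/G)\to K^*(M/G)$ along the inclusion $\bar{i}$ of the orbifold point $[1:0:\cdots:0]$ into $\complex P^n_w$. Passing through the Chern character and Grothendieck--Riemann--Roch, $ch(\bar{i}_!1)=\bar{i}_*(1)\cdot Td(T\complex P^n_w)^{-1}$ is the Poincar\'e dual of the point, whose only surviving term in top degree is the integral generator $v$ of $H^{2n}(\complex P^n_w)$. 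By the cohomology computation in the proof of Lemma~\ref{equiv of M} one has $z^n=v$ and $ch(1-u)^n=z^n$, whence $\bar{i}_!1=(1-u)^n$ and therefore $P(u)=(1-u)^n$.

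The main obstacle is exactly this last step. Because $i^*$ annihilates all information in the untwisted sector, the self-intersection formula is powerless to fix the first component, and one genuinely needs the geometric input from the coarse space. The delicate point is the orbifold normalization: the singular point has isotropy $\integer/p$, and one must use the integral generator $v$ rather than the class $x^n=pv$, so that no spurious factor of $p$ enters and the coefficient of $(1-u)^n$ comes out to be exactly $1$. Checking that the equivariant Gysin map really descends to the coarse pushforward under the Adem--Ruan identification, and that the Todd correction is harmless in top degree, are the two points that require the most care.
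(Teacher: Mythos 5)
Your proposal is correct and follows essentially the same route as the paper: the paper likewise determines the $\rational(\zeta)$-coordinate from the equivariant self-intersection formula $i^*i_!\,q(W)=q(W)\lambda_{-1}(nW)=q(W)(1-W)^n$ (read off through the identity on the cyclotomic factor), and identifies the first coordinate with the K-theoretic pushforward of a point into $\complex P^n_w$. The only difference is one of detail: you spell out the projection-formula reduction to $i_!(1)$ and the Chern-character/Poincar\'e-duality argument showing the point pushes forward to $(1-u)^n$, steps the paper leaves implicit.
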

\begin{proof}
The application in the first coordinate is  the pushforward in
K-theory of the inclusion of a point into the weighted projective
space $\complex P^n_w$, and the application of the second
coordinate is the image in $\rational (\zeta)$ of the map $i^*i_!
: R(\integer /p ) \to R (\integer /p)$
$$i^* i_!q(W)= q(W) \lambda_{-1}(n W)=q(W)
\lambda_{-1}(W)^n=q(\xi)(1-W)^n$$ where $n W$ represents the class
of the normal bundle of the inclusion $i$ as a $G$ equivariant
bundle.
\end{proof}

We can finish this section by giving the explicit calculation of
the stringy product in the orbifold K-theory of $[M/G]=\complex
P[p: 1: \dots :1]$.

\begin{theorem} \label{theorem stringy Ktheory weighted proj}
The stringy ring with rational coefficients of the K-theory of the
inertia orbifold of the weighted projective orbifold
$[M/G]=\complex P[p: 1: \dots :1]$ is
$$K_{st}^*([M/G])\otimes \rational \cong \left(\rational[u]/\langle (u-1)^{n+1}\rangle \times \rational(\zeta)
\right) \times \{g_0\} \oplus \bigoplus_{j=1}^{p-1} \left(
\rational \times \rational(\zeta) \right) \times \{g_j\}$$ with
the product defined by the following cases:
\begin{itemize}
\item ($j=0$, $k\neq 0$)  $$(p(u),q(\zeta),g_0) \star
(r,w(\zeta)),g_k) = (p(1)r, q(\zeta)w(\zeta),g_{k})$$ \item ($j+k
<p$, $j \neq 0$, $k \neq 0$)
$$(r,q(\zeta),g_j) \star (s,w(\zeta)),g_k) = (rs,
q(\zeta)w(\zeta),g_{j+k})$$ \item ($j+k=p$, $j \neq 0$, $k \neq
0$)
$$(r,q(\zeta),g_j) \star (s,w(\zeta)),g_k) = (rs(1-u)^n, q(\zeta)w(\zeta)(1-\zeta)^n,g_0)$$
\item ($j+k>p$)
$$(r,q(\zeta),g_j) \star (s,w(\zeta)),g_k) = (0, q(\zeta)w(\zeta)(1-\zeta)^n,g_{j+k-p})$$
\end{itemize}

\end{theorem}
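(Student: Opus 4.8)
The plan is to compute each case of the stringy product directly from Definition \ref{stringy product}, feeding in the explicit obstruction bundle from equation (\ref{obstruction bundle of weighted}) together with the pullback and pushforward formulas already established for the inclusions. The key observation is that for the inertia orbifold of $\complex P[p:1:\dots:1]$, the relevant intersections $M^{g_j,g_k}$ are either all of $M$ (when $j=0$ or $k=0$) or the circle $S^1 = M^{g_j}$ (when both $j,k \neq 0$), so the maps $e_1, e_2, e_3$ are either identities or copies of the inclusion $i: M^{\integer/p} \to M$ whose $i^*$ and $i_!$ we have already diagonalized. Everything reduces to bookkeeping in the ring $\rational[u]/\langle (1-u)^{n+1}\rangle \times \rational(\zeta)$ and its fixed-point analogue $\rational \times \rational(\zeta)$.

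First I would handle the case $j=0, k \neq 0$: here $g_0 = \mathrm{id}$, so $M^{g_0,g_k} = M^{g_k} = S^1$ and $g_0 g_k = g_k$, making $e_1 = i$, $e_2 = \mathrm{id}$, $e_3 = \mathrm{id}$, while $a_j = 0$ forces $D_{0,k}=0$ so $\lambda_{-1}(D_{0,k})=1$. The product then reads $e_{3!}(i^*(p(u),q(\zeta)) \otimes (r,w(\zeta)))$, and applying the pullback formula (\ref{pullback}) gives $(p(1)r, q(\zeta)w(\zeta))$ with label $g_k$, as claimed. Second I would treat both remaining nonzero cases where $M^{g_j,g_k} = M^{g_j} = S^1$, so $e_1 = e_2 = \mathrm{id}$ and the behavior is dictated entirely by $e_3$ and by whether $D_{j,k}$ vanishes. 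When $j+k<p$ we have $j+k-p$ reduced so that $g_{j+k}$ still has fixed-point set $S^1$, making $e_3 = \mathrm{id}$ and $D_{j,k}=0$, giving the plain multiplication $(rs, q(\zeta)w(\zeta))$ with label $g_{j+k}$. When $j+k=p$ we have $g_{j+k}=g_0=\mathrm{id}$, so $e_3 = i: S^1 \to M$ is the inclusion and $D_{j,k}=0$; here the pushforward formula supplies the factor $(1-u)^n$ in the first coordinate and $(1-\zeta)^n$ in the second, yielding $(rs(1-u)^n, q(\zeta)w(\zeta)(1-\zeta)^n)$ with label $g_0$.

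The most delicate case is $j+k>p$, and this is where I expect the main obstacle. Here $D_{j,k} = V_{j,k} = nW$ is the full normal bundle, so $\lambda_{-1}(D_{j,k}) = \lambda_{-1}(nW) = (1-W)^n$, and simultaneously $g_{j+k} = g_{j+k-p}$ has fixed set $S^1$ again, so $e_3 = \mathrm{id}$. The product becomes multiplication by $(1-W)^n$ inside $R(\integer/p)\otimes\rational \cong \rational \times \rational(\zeta)$; under the decomposition of Lemma \ref{lemma fixed point equiv}, the element $W$ maps to $(1,\zeta)$, so $(1-W)^n$ maps to $((1-1)^n, (1-\zeta)^n) = (0, (1-\zeta)^n)$. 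The vanishing of the first coordinate is exactly the source of the $0$ in the stated formula: multiplying the $\rational$-component by $0$ kills it, while the $\rational(\zeta)$-component gets the factor $(1-\zeta)^n$. The care required is to track this Euler-class factor through the correct identification of $W$ with its eigenvalues and to confirm that the label reduces to $g_{j+k-p}$; once the identification $W \mapsto (1,\zeta)$ is in place, the result (\ref{obstruction bundle of weighted}) together with the preceding lemmas finishes each case.
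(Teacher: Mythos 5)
Your proposal is correct and takes essentially the same approach as the paper: the paper's (much terser) proof likewise reduces everything to the obstruction-bundle formula (\ref{obstruction bundle of weighted}), Definition \ref{stringy product}, and Lemmas \ref{equiv of M} and \ref{lemma fixed point equiv}, with the Euler class being $1$ when $D_{j,k}=0$ and coming from $\lambda_{-1}(nW)=(1-W)^n$ otherwise. Your explicit case-by-case bookkeeping, including the identification $W \mapsto (1,\zeta)$ so that $(1-W)^n \mapsto (0,(1-\zeta)^n)$, which produces the zero in the $j+k>p$ case, is exactly the computation the paper leaves implicit.
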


\begin{proof}
From formula \ref{obstruction bundle of weighted} we know that the
if obstruction bundle is different than zero then it is precisely
the normal bundle. Therefore the Euler class of the obstruction
bundle is $1$ in the case that the bundle is zero, and $(1-
\zeta)^n$ in the case that it is the normal bundle. The product
structure $\star$ follows from the definition \ref{stringy
product} and of lemmas \ref{equiv of M} and \ref{lemma fixed point
equiv}.
\end{proof}

\subsection{Relation with Chen-Ruan Cohomology} \label{section
relation with CR}

In what follows we want to prove that there is a homomorphism of
rings from the stringy ring of the K-theory of the inertia
orbifold to the Chen-Ruan cohomology. We will show this fact by
first constructing a ring structure on the K-theory of the Borel
construction of the inertia orbifold. Then, via the some modified
version of the Chern character map, we will construct a ring
structure on the cohomology with rational coefficients of the
Borel construction of the inertia orbifold. This latter ring, once
tensored with $\rational$,
  will be isomorphic to the Chen-Ruan cohomology.

\subsection{Stringy K-theory for the Borel construction of the
inertia orbifold} Let's start by reviewing the relation between
equivariant K-theory and the K-theory of the Borel construction
(see \cite{AtiyahSegal_completion}).

For any $G$-space $N$ with $N_G:=N \times_G EG$, let's denote the
map $\tau_N: K_G(N) \to K(N_G)$ that takes any $G$-equivariant
complex vector bundle $F$ over $N$ and maps it to $\tau_N(F) =
F_G:= F\times_G EG$ a complex vector bundle over $N_G$. This map is a ring homomorphism and has very nice properties:
it behaves well under restriction and pushforwards, namely, if we
have an inclusion of $G$-manifolds $i: W \to N$, then $$
\tau_Wi^*(L)=i^*\tau_N(L) \ \ \ \ \mbox{and} \ \ \ \ \tau_N i_!(F)
= i_! \tau_W(F);$$ and it behaves well under the map
$\lambda_{-1}$, i.e.
$$\tau_N(\lambda_{-1}(L))= \lambda_{-1}(\tau_N(L)).$$
Moreover, the map $\tau_N$ realizes the famous Atiyah-Segal completion theorem \cite{AtiyahSegal_completion}.

\begin{theorem} \label{Theorem stringy product K-theory}
Let the orbifold $[M/G]$ be as in section \ref{section stringy
product in orbifold K-theory}. The K-theory of the Borel
construction of the inertia orbifold
$$K_{orb}^*([M/G]) := \bigoplus_{g \in \CC} K^*(M^g_G) \times\{g\}$$
can be endowed with the ring structure
\begin{eqnarray*}
\bullet : (K^*(M^g_G)\times\{g\}) \times (K^*(M^h_G) \times \{h\}) & \to & K^*(M^{gh}_G)\times\{gh\} \\
((F,g),(H,h)) & \mapsto & (F \bullet H,gh).
\end{eqnarray*}
with $F \bullet H =e_{3!}(e_1^*F \otimes e_2^*H \otimes
\lambda_{-1}(\tau_{M^{g,h}}(D_{g,h}))$.
\end{theorem}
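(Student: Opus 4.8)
The plan is to transport the associative structure already established for the stringy product $\star$ on $K_{st}^*([M/G])$ to the Borel construction by means of the homomorphism $\tau$. First I would record that the product $\bullet$ is defined by literally the same recipe as $\star$ in Definition \ref{stringy product}, with every equivariant ingredient replaced by its image under $\tau$: the pullbacks $e_1^*, e_2^*$, the pushforward $e_{3!}$, the tensor product, and the Euler class $\lambda_{-1}$ are now taken in the ordinary K-theory of the Borel constructions $M^g_G, M^h_G, M^{gh}_G, M^{g,h}_G$. These operations are all defined because the maps between Borel constructions induced by the inclusions $e_1,e_2,e_3$ carry finite-rank complex normal bundles, namely the $\tau$-images of the corresponding equivariant normal bundles, so the Gysin pushforward and the Thom/Euler class exist exactly as in the equivariant setting.

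Using the three compatibilities of $\tau$ recalled above, together with the fact that $\tau$ is a ring homomorphism, a direct computation shows that
$$\tau_{M^{gh}}(F\star H)=\tau_{M^{gh}}(e_{3!}(e_1^*F\otimes e_2^*H\otimes\lambda_{-1}(D_{g,h})))=\tau_{M^g}(F)\bullet\tau_{M^h}(H),$$
so $\tau$ intertwines $\star$ and $\bullet$. This already forces $\bullet$ to be associative on the image of $\tau$; but since by Atiyah-Segal completion \cite{AtiyahSegal_completion} the map $\tau$ is in general far from surjective, I would not try to deduce associativity on all of $K^*(M^g_G)$ from this alone. Instead I would prove associativity of $\bullet$ directly, repeating the argument of Subsection \ref{section associativity} now in ordinary K-theory.

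Concretely, associativity reduces, exactly as in Subsection \ref{section associativity}, to Quillen's excess intersection formula (the non-equivariant version of \cite{Quillen}) applied to the inclusions of Borel constructions, together with the identity
$$\tau(D_{g,h})|_N\oplus\tau(D_{gh,k})|_N\oplus E_1=\tau(D_{g,hk})|_N\oplus\tau(D_{h,k})|_N\oplus E_2$$
in $K^0(M^{g,h,k}_G)$, where $E_1,E_2$ denote the excess bundles of the two relevant Borel inclusions. This identity is obtained by applying the ring homomorphism $\tau$ to the equivariant identity
$$D_{g,h}|_N\oplus D_{gh,k}|_N\oplus E(M^{gh},M^{g,h},M^{gh,k})=D_{g,hk}|_N\oplus D_{h,k}|_N\oplus E(M^{hk},M^{g,hk},M^{h,k})$$
established in Subsection \ref{section associativity}, using that $\tau$ sends each equivariant excess bundle to the excess bundle of the corresponding Borel inclusion (both are assembled from the same tangent data along the fibers, which correspond under $\tau$). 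The unit is $(1,g_0)$ as for $\star$, and bilinearity is immediate, so these pieces assemble into the asserted ring structure.

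The main obstacle I anticipate is justifying pushforwards, Thom/Euler classes, and Quillen's formula on the Borel constructions, which are infinite-dimensional. I expect this to be handled by the fact that each $M^g_G\to BG$ is a fiber bundle with finite-dimensional almost complex fiber $M^g$ and that all the inclusions $e_i$ are fiberwise with finite-rank complex normal bundles; one may therefore either work fiberwise or pass to finite-dimensional approximations $M^g\times_G E$ of the Borel construction, on each of which Quillen's excess intersection formula and the Gysin machinery are classical, and then take the limit. Once this point is secured, the remainder is a mechanical transcription of the equivariant argument of Subsection \ref{section associativity}.
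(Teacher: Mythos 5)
Your proposal is correct and is essentially the paper's own argument: the paper's one-sentence proof (``associativity of $\bullet$ follows from the associativity of $\star$ and the properties of the maps $\tau$'') means exactly what you spell out, namely rerunning the excess-intersection argument of Subsection \ref{section associativity} on the Borel constructions, with the Gysin maps and Thom/Euler classes available because the relevant normal bundles are finite-rank $\tau$-images of the equivariant ones, and with the key bundle identity in $K^0(M^{g,h,k}_G)$ obtained by applying $\tau$ to the equivariant identity. Your explicit warning that the intertwining $\tau_{M^{gh}}(F\star H)=\tau_{M^g}F\bullet\tau_{M^h}H$ alone would prove associativity only on the image of $\tau$ (which, by Atiyah--Segal completion, is far from all of $K^*(M^g_G)$) is the correct way to read the paper's terse appeal to the properties of $\tau$, and your direct argument closes that potential gap.
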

\begin{proof}
The associativity of the product $\bullet$ follows from the
associativity of $\star$ (see section \ref{section associativity})
and the properties of the maps $\tau$.
\end{proof}

\begin{rem}
The construction above of theorem \ref{Theorem stringy product K-theory}
 is due to Jarvis-Kaufmann-Kimura
\cite{JarvisKaufmannKimura} when one considers the coarse moduli space
of the inertia orbifold. Ours generalizes it to the Borel
construction.

\end{rem}

\begin{theorem}
The maps \begin{eqnarray*} \tau_{M^g} : K^*_G(M^g) \to K^*(M^g_G)
\end{eqnarray*}
for $g \in \CC$ define a ring homomorphism $$\tau :
(K^*_{st}([M/G]), \star) \to (K^*_{orb}([M/G]),\bullet).$$
\end{theorem}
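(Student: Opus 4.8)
The goal is to show that the collection of maps $\tau_{M^g}\colon K^*_G(M^g)\to K^*(M^g_G)$, assembled componentwise over $g\in\CC$, intertwines the stringy product $\star$ on $K^*_{st}([M/G])$ with the Borel-construction product $\bullet$ on $K^*_{orb}([M/G])$. Concretely, for $(F,g)$ and $(H,h)$ I must verify the identity
\[
\tau_{M^{gh}}\bigl(e_{3!}(e_1^*F\otimes e_2^*H\otimes\lambda_{-1}(D_{g,h}))\bigr)
=e_{3!}\bigl(e_1^*\tau_{M^g}(F)\otimes e_2^*\tau_{M^h}(H)\otimes\lambda_{-1}(\tau_{M^{g,h}}(D_{g,h}))\bigr),
\]
where on the right the inclusions $e_1,e_2,e_3$ are now understood as the induced maps between Borel constructions $M^{g,h}_G\to M^g_G$, etc.

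**Key steps.**
First I would record that the whole right-hand side of Definition \ref{stringy product} is built from exactly three operations: equivariant pullback $e_i^*$, the tensor product $\otimes$, equivariant pushforward $e_{3!}$, together with the class $\lambda_{-1}(D_{g,h})$. The proof is then purely a matter of pushing $\tau$ through each of these in turn, using the compatibility properties listed just before the statement. Explicitly, I would peel the identity from the outside in: apply $\tau_{M^{gh}}$ to the pushforward and use $\tau_N i_!=i_!\tau_W$ to move it past $e_{3!}$, obtaining $e_{3!}\tau_{M^{g,h}}(e_1^*F\otimes e_2^*H\otimes\lambda_{-1}(D_{g,h}))$. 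Next, since $\tau_N$ is a ring homomorphism, it distributes over $\otimes$, splitting this into $e_{3!}\bigl(\tau_{M^{g,h}}(e_1^*F)\otimes\tau_{M^{g,h}}(e_2^*H)\otimes\tau_{M^{g,h}}(\lambda_{-1}(D_{g,h}))\bigr)$. Then I would invoke $\tau_W i^*=i^*\tau_N$ on the first two factors to get $\tau_{M^{g,h}}(e_1^*F)=e_1^*\tau_{M^g}(F)$ and $\tau_{M^{g,h}}(e_2^*H)=e_2^*\tau_{M^h}(H)$, and the $\lambda_{-1}$-compatibility $\tau_N(\lambda_{-1}(L))=\lambda_{-1}(\tau_N(L))$ on the last factor to rewrite $\tau_{M^{g,h}}(\lambda_{-1}(D_{g,h}))=\lambda_{-1}(\tau_{M^{g,h}}(D_{g,h}))$. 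Collecting these yields precisely $F\bullet H$ in the $gh$-component, which is the desired equality. Finally I would note that $\tau$ also respects the unit and the $\integer$-module grading by $\CC$, so it is a ring homomorphism and not merely a product-preserving map.

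**Main obstacle.**
The substantive content is entirely front-loaded into verifying that the three naturality properties quoted for $\tau_N$ actually apply in this situation, so the genuine work is bookkeeping about which maps are which. The one point deserving care is that the symbols $e_1,e_2,e_3$ and $\lambda_{-1}(\tau_{M^{g,h}}(D_{g,h}))$ appearing in the definition of $\bullet$ (Theorem \ref{Theorem stringy product K-theory}) refer to the Borel constructions $M^g_G,\dots$ and to induced maps between them; I must confirm that $\tau$ is natural with respect to the inclusions $e_1,e_2,e_3$ after applying the Borel functor, i.e.\ that the induced maps on $N_G$ are exactly the $e_i$ used in $\bullet$. This is immediate because $(-)_G=(-)\times_G EG$ is a functor, so an equivariant inclusion $i\colon W\to N$ induces $i_G\colon W_G\to N_G$ and the stated relations $\tau_W i^*=i^*\tau_N$ and $\tau_N i_!=i_!\tau_W$ are precisely the assertions that $\tau$ commutes with these induced maps. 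Once this identification is in place, no further obstruction remains: the argument is a single diagram chase using the three compatibilities and the ring-homomorphism property of $\tau$, with no analytic or geometric input beyond what is already established.
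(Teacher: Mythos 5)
Your proposal is correct and follows essentially the same argument as the paper: apply $\tau_{M^{gh}}$ to the definition of $\star$, push it past $e_{3!}$ using the pushforward compatibility, then distribute it over the tensor product and use the pullback and $\lambda_{-1}$ compatibilities to recover the definition of $\bullet$. The paper's proof is exactly this chain of equalities, so your additional remarks on functoriality of the Borel construction and units are just making explicit what the paper leaves implicit.
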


\begin{proof}
Take $F \in K^*_G(M^g)$ and $H \in K_G^*(M^h)$ and consider the
following set of equalities: \begin{eqnarray*} \tau_{M^{gh}}(F
\star H) & = & \tau_{M^{gh}}\left(e_{3!}\left(e_1^*F \otimes
e_2^*H \otimes
\lambda_{-1}(D_{g,h})\right)\right) \\
&  = & e_{3!}\left(\tau_{M^{g,h}} \left(e_1^*F \otimes e_2^*H
\otimes \lambda_{-1}(D_{g,h})\right) \right) \\
& = & e_{3!}\left( e_1^*(\tau_{M^g}F) \otimes e_2^*(\tau_{M^h} H)
\otimes \lambda_{-1}(\tau_{M^{g,h}} D_{g,h}) \right) \\
& = & \tau_{M^g}F \bullet \tau_{M^h} H.
\end{eqnarray*}
The theorem follows.
\end{proof}

\subsection{Chern Character}

We can define a stringy product on the cohomology of the Borel
construction of the inertia orbifold of $[M/G]$ in the same way
that it was done in the previous section.

\begin{proposition}
The cohomology of the Borel construction of the inertia orbifold
$$H^*_{orb}([M/G]) := \bigoplus_{g \in \CC} H^*(M^g_G, \integer) \times\{g\}$$
can be endowed with the ring structure
\begin{eqnarray*}
\circ : (H^*(M^g_G, \integer)\times\{g\}) \times (H^*(M^h_G, \integer) \times \{h\}) & \to & H^*(M^{gh}_G, \integer)\times\{gh\} \\
((\alpha,g),(\beta,h)) & \mapsto & (\alpha \circ \beta,gh).
\end{eqnarray*}
with $\alpha \circ \beta  =e_{3*}(e_1^*\alpha \cdot e_2^* \beta
\cdot {\rm{eu}}(\tau_{M^{g,h}}(D_{g,h}))$.
\end{proposition}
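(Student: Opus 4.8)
The plan is to prove the proposition by establishing that $\circ$ is associative, since this is its only substantive content. Bilinearity of $\circ$ (hence distributivity over addition) is immediate from the linearity of the pullbacks $e_1^*,e_2^*$, the cup product, and the Gysin pushforward $e_{3*}$. The unit is the class $1\in H^*(M_G,\integer)$ sitting in the $g=\mathrm{id}$ summand: for $g=\mathrm{id}$ the action of $g$ on the normal bundle of $M^{\mathrm{id},h}=M^h$ is trivial, so $D_{\mathrm{id},h}=0$ and $\mathrm{eu}(\tau(D_{\mathrm{id},h}))=1$, while $e_2,e_3$ are identity maps, whence $1\circ(\beta,h)=(\beta,h)$. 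Thus everything reduces to associativity.

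My approach is to transcribe the argument used for the K-theory product $\bullet$ in Theorem \ref{Theorem stringy product K-theory} and section \ref{section associativity}, replacing the K-theoretic Euler class $\lambda_{-1}$ by the ordinary cohomological Euler class $\mathrm{eu}$, the pushforward $e_{3!}$ by $e_{3*}$, and the equivariant excess intersection formula in K-theory by its cohomological counterpart, which is exactly Quillen's formula $i_2^* i_{1*} x = j_{2*}(e(E(S,S_1,S_2))\cdot j_1^* x)$ recalled in section \ref{section associativity}. Concretely, for $\alpha\in H^*(M^g_G)$, $\beta\in H^*(M^h_G)$ and $\gamma\in H^*(M^k_G)$ I would expand both $(\alpha\circ\beta)\circ\gamma$ and $\alpha\circ(\beta\circ\gamma)$ and use the excess intersection formula to rewrite each iterated pushforward as a single Gysin map $e_*$ from $N:=M^{g,h,k}$ to $M^{ghk}_G$, applied to the product of the pullbacks of $\alpha,\beta,\gamma$ to $N$ times an Euler-class factor. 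I would do this directly rather than trying to import associativity from $\bullet$ through the Chern character, since the plain $\mathrm{eu}$ does not match $\mathrm{ch}\circ\lambda_{-1}$ on the nose (that mismatch is what forces the modified Chern character introduced afterwards), so no ring map is available to transfer the result cheaply.

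The equality of the two sides then reduces to the equality of the two Euler-class factors, which comes from the bundle identity on $N$ already established in section \ref{section associativity}, namely
\[
D_{g,h}|_N \oplus D_{gh,k}|_N \oplus E(M^{gh},M^{g,h},M^{gh,k}) = D_{g,hk}|_N \oplus D_{h,k}|_N \oplus E(M^{hk},M^{g,hk},M^{h,k})
\]
in $K^0_G(N)$. Applying $\tau_N$ and then $\mathrm{eu}$, and using that $\tau_N$ sends $\oplus$ to $\oplus$ while $\mathrm{eu}$ is multiplicative under direct sums, together with the compatibility of $\tau$ with restriction and Euler classes recorded before Theorem \ref{Theorem stringy product K-theory}, the two Euler-class factors coincide and associativity follows.

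The main obstacle I expect is purely bookkeeping: the identity above is virtual (the excess intersection bundles $E$ carry a $\ominus$), so one must check that passing from the virtual $K^0_G(N)$-identity to an honest equality of Euler classes is legitimate. This is handled exactly as in the K-theory associativity argument: after applying the excess intersection formula, the bundles actually appearing inside $e_*$ assemble into genuine (non-virtual) bundles on both sides whose Euler classes agree term by term; alternatively one may work in the localization of $H^*(N_G)$ where the relevant Euler classes are invertible. Since this is precisely the verification carried out in section \ref{section associativity}, no new geometric input is needed beyond the already-established bundle identity and the multiplicativity of $\mathrm{eu}$.
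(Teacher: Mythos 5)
Your proposal is correct and follows essentially the same route as the paper's own (very terse) proof: the paper also derives associativity of $\circ$ from the associativity argument for $\star$ in section \ref{section associativity} — i.e.\ the same bundle identity on $M^{g,h,k}$ pushed through $\tau$ — together with the compatibility of $\tau$ with pullbacks, pushforwards and Euler classes, and the multiplicativity of $\mathrm{eu}$, which is exactly what you spell out via the cohomological excess intersection formula. Your additional remarks (the unit, bilinearity, the caveat that the virtual identity must be realized by honest bundles so that taking $\mathrm{eu}$ is legitimate, and the observation that one cannot transfer associativity cheaply through the uncalibrated Chern character) are sound elaborations of details the paper leaves implicit.
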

\begin{proof}
The associativity of the product $\circ$ follows also from the
associativity of product $\star$ (see section \ref{section
associativity}), the properties of the map $\tau_{M^{g,h}}$ and
the properties of the Euler class ``eu".
\end{proof}

In order to have a well defined ring homomorphism between
$(K^*_{orb}([M/G]), \bullet)$ and $(H^*_{orb}([M/G]),\circ)$ we
need to calibrate the Chern character maps at the level of the
spaces $M^g_G$ with some invertible elements in cohomology. This
construction was developed in \cite{JarvisKaufmannKimura} at the
level of the coarse moduli space of the inertia orbifold. The map
is basically the same at the level of the Borel construction.

To construct this calibrated Chern character we need to recall
some properties of the Thom ismorphisms in K-theory and in
cohomology (see for instance \cite[section 2]{AtiyahSinger3}).

Let $X$ be a manifold and $V$ a complex vector bundle over $X$. Let
$$ \phi : K^*(X) \to K^*(V)$$
$$ \psi : H^*(X, \rational) \to H^*(V, \rational)$$
be the Thom isomorphisms in K-theory and in cohomology
respectively. Then, for any $ u \in K^*(X)$ one has

\begin{eqnarray} \label{pushforwards Ktheory cohomology}
{\rm{ch}}( \phi (u)) = \psi( {\rm{ch}}(u) \cdot
\mu(V))\end{eqnarray}
 where the cohomology class $\mu(V)$ is
defined as
$$\mu(V):= \prod \frac{1-e^{x_i}}{x_i}$$
where the $x_i$'s are the Chern roots of $V$. Moreover, the class
$\mu(V)$ is multiplicative, i.e. $\mu(V \oplus F) = \mu(V) \cdot
\mu(F)$ and measures the difference of the Chern character of the
Euler class in K-theory with the one in cohomology, namely
\begin{eqnarray}
\label{euler classes in K and cohomology} {\rm{ch}}
\lambda_{-1}(V) = {\rm{eu}}(V) \cdot \mu(V).\end{eqnarray}

If we have an inclusion of manifolds $i : X \to Y$ with normal
bundle $V$ and pushforward maps
$$ i_! : K^*(X) \to K^*(Y)$$
$$ i_* : H^*(X, \rational) \to H^*(Y, \rational)$$
then one has the equality
$${\rm{ch}}( i_! u) = i_*( {\rm{ch}}(u) \cdot \mu(V))$$
where in this case the cohomology class $\mu(V)$ has support on
the normal bundle of $X$.

We are now ready to define the Chern character map for the
orbifold K-theory.

\begin{definition} \cite[Formula (55)]{JarvisKaufmannKimura}
The orbifold Chern character is the homomorphism
$${\it{Ch}} : K_{orb}^*([M/G]) \to H_{orb}^*([M/G] ; \rational)$$
such that for all $g \in \CC$ is defined as
\begin{eqnarray*}
{\it{Ch}}: K^*(M^g_G) &\to & H^*(M^g_G ; \rational) \\
F &  \mapsto & {\rm{ch}}(F) \mu^{-1}( \tau_{M^g} \FF_g)
\end{eqnarray*}
where the $G$-bundle $\FF_g$ over $M^g$ is defined in formula
\ref{bundles Fg}.
\end{definition}

The fact that the orbifold Chern character is a ring homomorphism
depends on the following lemma.

\begin{lemma}
In $K_G(M^{g,h})$ the following equality holds $$D_{g,h} \oplus
TM^{gh}|_{M^{g,h}} \ominus TM^{g,h}  = \ominus e_1^*\FF_{g}\ominus
e_2^*\FF_{h} \oplus e_3^*\FF_{gh}.$$
\end{lemma}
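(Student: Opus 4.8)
The plan is to verify the identity summand-by-summand using the simultaneous diagonalization set up just before Definition~\ref{Obstruction bundle}. Write $N := M^{g,h}$. Over $N$ the action of the abelian group gives a $G$-equivariant splitting $TM|_N \cong TN \oplus \bigoplus_j V_j$, where each $V_j$ is the simultaneous eigenbundle on which $g$ acts by $e^{2\pi i a_j}$ and $h$ by $e^{2\pi i b_j}$, with $a_j,b_j \in \rational \cap [0,1)$ and $(a_j,b_j)\neq(0,0)$ (the zero pair is absorbed into $TN$). Every bundle occurring in the statement is built from the $g$-, $h$- and $gh$-actions on $TM|_N$, so each one restricts to an integral or rational combination of the $V_j$; I would therefore reduce the claim to an equality of $V_j$-coefficients in $K_G(N)\otimes\rational$.

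First I would collect the restriction formulas. As already observed in the proof of the lemma comparing $D_{g,h}$ and $B_{g,h}$, one has $e_1^*\FF_g \cong \bigoplus_j a_j V_j$ and $e_2^*\FF_h \cong \bigoplus_j b_j V_j$. Since $gh$ acts on $V_j$ by $e^{2\pi i(a_j+b_j)}$, writing $d_j\in[0,1)$ for the reduction of $a_j+b_j$ modulo $1$ gives $e_3^*\FF_{gh}\cong \bigoplus_j d_j V_j$ (recall $\FF_{gh}$ is assembled in (\ref{bundles Fg}) from the fractional parts of the $gh$-eigenvalues). For the two integral terms on the left, note that $a_j+b_j\in[0,2)$ and $(a_j,b_j)\neq(0,0)$, so $gh$ fixes a normal direction $V_j$ precisely when $a_j+b_j=1$; hence $TM^{gh}|_N \ominus TN \cong \bigoplus_{\{j\,\mid\, a_j+b_j=1\}} V_j$, while Definition~\ref{Obstruction bundle} gives $D_{g,h}\cong \bigoplus_{\{j\,\mid\, a_j+b_j>1\}} V_j$.

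The computation then collapses to the elementary identity $a_j+b_j = d_j + \lfloor a_j+b_j\rfloor$, in which $\lfloor a_j+b_j\rfloor \in\{0,1\}$ and equals $1$ exactly when $a_j+b_j\geq 1$. Reading this as an identity of $V_j$-coefficients shows that the combination $e_1^*\FF_g \oplus e_2^*\FF_h \ominus e_3^*\FF_{gh}$ carries coefficient $\lfloor a_j+b_j\rfloor$ on $V_j$, so it equals $\bigoplus_{\{j\,\mid\, a_j+b_j\geq 1\}} V_j$, an honest integral class, which is exactly $D_{g,h}\oplus(TM^{gh}|_N\ominus TN)$; this is the asserted relation in $K_G(M^{g,h})$. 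I expect the only genuine subtlety to be the boundary eigenbundles with $a_j+b_j=1$: these are invisible both to $D_{g,h}$ and to $\FF_{gh}$ (there $d_j=0$), and they are recovered solely through the virtual correction $TM^{gh}|_N \ominus TN$. Tracking the floor-versus-fractional-part bookkeeping through this case — equivalently, pinning down the placement of $\oplus$ versus $\ominus$ among the three $\FF$-terms so that the integral class on the left is reproduced with the correct orientation — is the one step I would check most carefully.
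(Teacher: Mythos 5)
Your proof is correct, and in substance it is an unpacked version of the paper's own argument. The paper proves this lemma in two lines: it substitutes the relation $\FF_{gh} \oplus \FF_{(gh)^{-1}} = TM|_{M^{gh}} \ominus TM^{gh}$ into the earlier lemma $D_{g,h}=B_{g,h}$ (with $B_{g,h}$ as in (\ref{bundle Bgh})) and rearranges. You instead redo the whole computation in the simultaneous eigenbundles $V_j$, reducing everything to $a_j+b_j = d_j + \lfloor a_j+b_j\rfloor$. Since the lemma $D_{g,h}=B_{g,h}$ was itself proved by exactly this eigenbundle bookkeeping, the two routes carry the same content; yours has the merit of being self-contained and of making completely explicit the boundary directions with $a_j+b_j=1$, which are invisible to $D_{g,h}$ and to $\FF_{gh}$ and are accounted for only by the virtual term $TM^{gh}|_{M^{g,h}} \ominus TM^{g,h}$.

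The point you flagged as the one to check carefully --- the placement of $\oplus$ versus $\ominus$ among the three $\FF$-terms --- is a genuine discrepancy, and it resolves in your favor: the statement as printed has the signs of its right-hand side reversed. Both your computation and the paper's own proof (substituting $\FF_{(gh)^{-1}}|_{M^{g,h}} = TM|_{M^{g,h}} \ominus TM^{gh}|_{M^{g,h}} \ominus \FF_{gh}|_{M^{g,h}}$ into $B_{g,h}$) yield
\begin{equation*}
D_{g,h} \oplus TM^{gh}|_{M^{g,h}} \ominus TM^{g,h} \; = \; e_1^*\FF_{g} \oplus e_2^*\FF_{h} \ominus e_3^*\FF_{gh},
\end{equation*}
not $\ominus e_1^*\FF_{g}\ominus e_2^*\FF_{h} \oplus e_3^*\FF_{gh}$. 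A one-dimensional check settles it: at an isolated fixed point with tangent space $\complex$ on which $\integer/3$ acts with weight $1/3$, take $g=h$ to be the element of weight $2/3$; then the left-hand side is $[\complex]$ (coefficient $+1$, since $a_1+b_1=4/3>1$ and the tangential terms vanish), your right-hand side is $(2/3+2/3-1/3)[\complex]=+[\complex]$, while the printed right-hand side gives $-[\complex]$. The same positive sign is confirmed by the paper's weighted projective example, where for $j+k>p$ the obstruction bundle is the full normal bundle. Note that this sign error is not isolated: Corollary \ref{corollary mu} is obtained by applying the multiplicative class $\mu$ to the printed (incorrect) identity, and the proof that the orbifold Chern character is a ring homomorphism then relies on that corollary together with the calibration $\mu^{-1}(\tau_{M^g}\FF_g)$; with the corrected signs, consistency requires replacing $\mu^{-1}(\tau_{M^g}\FF_g)$ by $\mu(\tau_{M^g}\FF_g)$ (and correspondingly inverting the $\FF$-factors in Corollary \ref{corollary mu}). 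So your proof establishes the correct identity, and the care you advertised at the end was exactly what was needed.
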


\begin{proof}
We know that
$$\FF_{gh} \oplus \FF_{(gh)^{-1}} = TM|_{M^{gh}} \ominus
TM^{gh}.$$ The equality then holds from the fact that $D_{g,h} =
B_{g,h}$ where $B_{g,h}$ is defined in formula \ref{bundle Bgh}.
\end{proof}

Applying the homomorphism $\tau_{M^{g,h}}$ and the map $\mu$ we
get,
\begin{cor} \label{corollary mu}$$\mu(\tau_{M^{g,h}} D_{g,h}) \mu(
\tau_{M^{g,h}}( TM^{gh}|_{M^{g,h}} \ominus TM^{g,h}))
e_3^*\mu^{-1}( \tau_{M^{gh}} \FF_{gh}) = \hspace{3cm}$$ $$
\hspace{8cm} e_1^*\mu^{-1}( \tau_{M^{g}} \FF_{g}) e_2^*\mu^{-1}(
\tau_{M^{h}} \FF_{h}).$$
\end{cor}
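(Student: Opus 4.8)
The plan is to obtain the identity by transporting the $K$-theoretic equality of the preceding lemma through the two multiplicative transformations $\tau$ and $\mu$ and then rearranging. I would begin with the lemma's equality in $K_G(M^{g,h})$,
$$D_{g,h} \oplus \left( TM^{gh}|_{M^{g,h}} \ominus TM^{g,h} \right) = \ominus\, e_1^*\FF_{g} \ominus e_2^*\FF_{h} \oplus e_3^*\FF_{gh},$$
and apply the ring homomorphism $\tau_{M^{g,h}}$ to both sides. Since $\tau$ respects $\oplus$ and $\ominus$ and commutes with restrictions (via the relation $\tau_W i^* = i^* \tau_N$ recalled above), the three terms $e_i^*\FF$ become $e_1^*\tau_{M^g}\FF_g$, $e_2^*\tau_{M^h}\FF_h$ and $e_3^*\tau_{M^{gh}}\FF_{gh}$, producing an equality of virtual bundles over the Borel construction $M^{g,h}_G$.

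Next I would apply $\mu$. The essential points are that $\mu$ is multiplicative, so it carries $\oplus$ to products and $\ominus$ to inverses, and that, being built from the Chern roots, it commutes with the pullbacks $e_i^*$, that is $\mu(e_i^* E) = e_i^*\mu(E)$. Feeding the $\tau$-transformed equality into $\mu$ then gives
$$\mu(\tau_{M^{g,h}} D_{g,h}) \cdot \mu\left(\tau_{M^{g,h}}\left(TM^{gh}|_{M^{g,h}} \ominus TM^{g,h}\right)\right) = e_3^*\mu(\tau_{M^{gh}}\FF_{gh}) \cdot e_1^*\mu^{-1}(\tau_{M^g}\FF_g) \cdot e_2^*\mu^{-1}(\tau_{M^h}\FF_h).$$
Multiplying both sides by $e_3^*\mu^{-1}(\tau_{M^{gh}}\FF_{gh})$ isolates the two factors $e_1^*\mu^{-1}(\tau_{M^g}\FF_g)$ and $e_2^*\mu^{-1}(\tau_{M^h}\FF_h)$ on the right and yields exactly the stated identity.

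The only genuine care needed is to confirm that $\mu$ descends to a well-defined multiplicative map from $K$-theory classes to the units of $H^*(\,\cdot\,;\rational)$, so that the subtracted bundles legitimately turn into inverse factors and the final rearrangement is valid. This follows because $\mu(V)=\prod_i (1-e^{x_i})/x_i$ has degree-zero component $(-1)^{\mathrm{rank}\,V}$, hence differs from a unit by a class in strictly positive degree, which is nilpotent on the finite-dimensional space in question; thus $\mu(V)$ is invertible and $\mu$ extends from genuine bundles to virtual ones with $\mu(\ominus B)=\mu(B)^{-1}$. With multiplicativity, compatibility with pullbacks, and this invertibility in hand, the corollary is an immediate algebraic consequence of the lemma, and the remaining work is the routine bookkeeping of the $\ominus$ signs.
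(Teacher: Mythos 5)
Your proposal is correct and is precisely the argument the paper intends: the corollary is obtained by applying $\tau_{M^{g,h}}$ and then $\mu$ to the preceding lemma's equality in $K_G(M^{g,h})$, using multiplicativity of $\mu$, its compatibility with pullbacks, and the relation $\tau_W i^* = i^*\tau_N$, and finally multiplying through by $e_3^*\mu^{-1}(\tau_{M^{gh}}\FF_{gh})$. The paper leaves these steps implicit (``Applying the homomorphism $\tau_{M^{g,h}}$ and the map $\mu$ we get''), so your write-up, including the check that $\mu$ of a virtual bundle is invertible, simply makes the same derivation explicit.
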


\begin{theorem} \cite[Theorem 6.1]{JarvisKaufmannKimura}
The orbifold Chern character is a ring homomorphism
$${\it{Ch}} : (K_{orb}^*([M/G]), \bullet) \to (H_{orb}^*([M/G] ; \rational), \circ).$$
\end{theorem}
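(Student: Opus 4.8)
\emph{Proof sketch.} The plan is to verify directly that $\mathit{Ch}(F \bullet H) = \mathit{Ch}(F) \circ \mathit{Ch}(H)$ for $F \in K^*(M^g_G)$ and $H \in K^*(M^h_G)$, expand both sides into pushforwards along $e_3$, and observe that the two integrands coincide by Corollary \ref{corollary mu}.

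First I would expand the left-hand side. By definition $\mathit{Ch}(F \bullet H) = \mathrm{ch}(F \bullet H) \cdot \mu^{-1}(\tau_{M^{gh}} \FF_{gh})$, and $F \bullet H = e_{3!}(e_1^* F \otimes e_2^* H \otimes \lambda_{-1}(\tau_{M^{g,h}}(D_{g,h})))$ by Theorem \ref{Theorem stringy product K-theory}. Applying the relation $\mathrm{ch}(i_! u) = i_*(\mathrm{ch}(u) \cdot \mu(V))$ to the inclusion $e_3$, whose normal bundle is $\tau_{M^{g,h}}(TM^{gh}|_{M^{g,h}} \ominus TM^{g,h})$, together with the multiplicativity of $\mathrm{ch}$, its commutation with pullbacks, and the identity $\mathrm{ch}\,\lambda_{-1}(V) = \mathrm{eu}(V) \cdot \mu(V)$ from formula (\ref{euler classes in K and cohomology}), I would rewrite $\mathrm{ch}(F \bullet H)$ as a single pushforward $e_{3*}$ of a product of classes over $M^{g,h}_G$. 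The calibration factor $\mu^{-1}(\tau_{M^{gh}} \FF_{gh})$ is then moved inside the pushforward via the projection formula, where it appears as $e_3^* \mu^{-1}(\tau_{M^{gh}} \FF_{gh})$.

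Next I would expand the right-hand side $\mathit{Ch}(F) \circ \mathit{Ch}(H)$ directly from the definition of $\circ$, distributing $e_1^*$ and $e_2^*$ over the products defining $\mathit{Ch}(F)$ and $\mathit{Ch}(H)$. At this point both sides are pushforwards $e_{3*}$ of products over $M^{g,h}_G$ that share the factors $e_1^* \mathrm{ch}(F)$, $e_2^* \mathrm{ch}(H)$ and $\mathrm{eu}(\tau_{M^{g,h}} D_{g,h})$; the two integrands therefore coincide precisely when
$$\mu(\tau_{M^{g,h}} D_{g,h}) \cdot \mu(\tau_{M^{g,h}}(TM^{gh}|_{M^{g,h}} \ominus TM^{g,h})) \cdot e_3^* \mu^{-1}(\tau_{M^{gh}} \FF_{gh}) = e_1^* \mu^{-1}(\tau_{M^{g}} \FF_{g}) \cdot e_2^* \mu^{-1}(\tau_{M^{h}} \FF_{h}),$$
which is exactly the content of Corollary \ref{corollary mu}. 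Since the integrands agree, so do the pushforwards, and the theorem follows.

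The computation is essentially bookkeeping, so the main obstacle is keeping the $\mu$-factors correctly placed: one must identify the normal bundle of $e_3$ so that the push-pull formula produces precisely the factor $\mu(\tau_{M^{g,h}}(TM^{gh}|_{M^{g,h}} \ominus TM^{g,h}))$, and one must use the projection formula to transfer the calibration of $\FF_{gh}$ across $e_3$. Once these are in place, the multiplicativity of $\mu$ and the Euler-class identity (\ref{euler classes in K and cohomology}) align every remaining term, and Corollary \ref{corollary mu} closes the argument.
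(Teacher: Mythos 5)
Your proof is correct and takes essentially the same route as the paper's own proof: expand $\mathit{Ch}(F \bullet H)$ via the push-pull formula (\ref{pushforwards Ktheory cohomology}) along $e_3$ together with the identity (\ref{euler classes in K and cohomology}), move the calibration factor $\mu^{-1}(\tau_{M^{gh}}\FF_{gh})$ inside the pushforward by the projection formula, and match the resulting integrand with that of $\mathit{Ch}(F) \circ \mathit{Ch}(H)$ using Corollary \ref{corollary mu}. The identity you isolate as the crux is exactly the one the paper invokes, so nothing is missing.
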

\begin{proof}
Take $F \in K^*(M^g_G)$ and $H \in H^*(M^h_G)$ and consider the
following set of equalities:
\begin{eqnarray*}
{\it{Ch}}(F \bullet H) & = & {\rm{ch}}(F \bullet H)
\mu^{-1}(\tau_{M^{gh}} \FF_{gh}) \\
& = &  {\rm{ch}}\left( e_{3!} \left( e_1^*F \otimes e_2^*H \otimes
\lambda_{-1}( \tau_{M^{g,h}} D_{g,h}) \right) \right)
\mu^{-1}(\tau_{M^{gh}} \FF_{gh}) \\
& = & e_{3*} \Big( e_1^* {\rm{ch}} F \ e_2^* {\rm{ch}} H \
{\rm{eu}}(\tau_{M^{g,h}} D_{g,h}) \ \mu( \tau_{M^{g,h}} D_{g,h})
\Big.
 \\
 &  & \hspace{2cm}
  \Big. \mu(\tau_{M^{g,h}}( TM^{gh}|_{M^{g,h}} \ominus TM^{g,h})) \Big) \mu^{-1}(\tau_{M^{gh}} \FF_{gh}) \\
& = & e_{3*} \Big( e_1^* {\rm{ch}} F \ e_2^* {\rm{ch}} H \
{\rm{eu}}(\tau_{M^{g,h}} D_{g,h}) \ \mu( \tau_{M^{g,h}} D_{g,h})
\Big.
 \\
 &  & \hspace{2cm}
  \Big. \mu(\tau_{M^{g,h}}( TM^{gh}|_{M^{g,h}} \ominus TM^{g,h})) \ e_3^*\mu^{-1}(\tau_{M^{gh}} \FF_{gh})\Big) \\
& = & e_{3*} \Big( e_1^* {\rm{ch}} F \ e_2^* {\rm{ch}} H \
{\rm{eu}}(\tau_{M^{g,h}} D_{g,h}) e_1^*\mu^{-1}( \tau_{M^{g}}
\FF_{g}) e_2^*\mu^{-1}(
\tau_{M^{h}} \FF_{h}) \Big)  \\
& = & e_{3*} \Big( e_1^* {\it{Ch}} F \ e_2^* {\it{Ch}} H \
{\rm{eu}}(\tau_{M^{g,h}} D_{g,h})  \Big) \\
& = & {\it{Ch}} F \circ {\it{Ch}} H.
\end{eqnarray*}
From the second to the third line we used formula
\ref{pushforwards Ktheory cohomology}, the fourth line is by the
properties of the pushforward and the fifth line is by corollary
\ref{corollary mu}.
\end{proof}

Using the fact that the Chern character map becomes a $\integer
/2$ graded vector space isomorphism when one tensors the K-theory
with the rational numbers, and because the classes given by $\mu$
are all invertible, we can deduce that

\begin{cor}
The orbifold Chern character is a ring isomorphism when one
tensors the orbifold K-theory with the rationals, i.e.
$${\it{Ch}} : (K_{orb}^*([M/G]) \otimes \rational, \bullet) \stackrel{\cong}{\to} (H_{orb}^*([M/G] ; \rational), \circ).$$
is a ring isomorphism.
\end{cor}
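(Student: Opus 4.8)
The final statement is the corollary asserting that the orbifold Chern character becomes a ring isomorphism after tensoring the orbifold K-theory with $\rational$. The plan is to combine two facts that are already established in the excerpt: first, the preceding theorem shows that ${\it{Ch}}$ is a ring homomorphism between $(K_{orb}^*([M/G]), \bullet)$ and $(H_{orb}^*([M/G];\rational), \circ)$; second, we must upgrade this homomorphism to a bijection after rationalizing the source. Since the product structures $\bullet$ and $\circ$ are already compatible, the only remaining task is to verify that the underlying additive map is an isomorphism of $\integer/2$-graded $\rational$-vector spaces.

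First I would recall that the orbifold K-theory and orbifold cohomology are defined summandwise over $g \in \CC$, namely $K_{orb}^*([M/G]) = \bigoplus_{g \in \CC} K^*(M^g_G) \times \{g\}$ and $H_{orb}^*([M/G];\rational) = \bigoplus_{g \in \CC} H^*(M^g_G;\rational) \times \{g\}$. Because ${\it{Ch}}$ is defined componentwise, it suffices to show that on each summand the map $F \mapsto {\rm{ch}}(F)\,\mu^{-1}(\tau_{M^g}\FF_g)$ is a rational isomorphism $K^*(M^g_G)\otimes\rational \stackrel{\cong}{\to} H^*(M^g_G;\rational)$. This component map factors as the ordinary Chern character ${\rm{ch}}: K^*(M^g_G)\otimes\rational \to H^*(M^g_G;\rational)$ followed by multiplication by the fixed cohomology class $\mu^{-1}(\tau_{M^g}\FF_g)$.

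The two ingredients that make each component map an isomorphism are stated in the text immediately preceding the corollary. The ordinary Chern character ${\rm{ch}}$ is a $\integer/2$-graded isomorphism of vector spaces once K-theory is tensored with $\rational$; this is the classical Chern character isomorphism applied to the space $M^g_G$. Multiplication by $\mu^{-1}(\tau_{M^g}\FF_g)$ is an isomorphism of cohomology because the class $\mu(V) = \prod (1-e^{x_i})/x_i$ has leading term $1$ in degree zero (each factor starts with $1$), hence $\mu(\tau_{M^g}\FF_g)$ is an invertible element of the graded-commutative ring $H^*(M^g_G;\rational)$, and its inverse $\mu^{-1}(\tau_{M^g}\FF_g)$ is well defined. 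Therefore the composite is a bijection on each summand.

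The step I expect to require the most care is the invertibility of $\mu$, since one must confirm that $\mu(\tau_{M^g}\FF_g)$ genuinely lies in the even part of the cohomology ring with nonzero constant term, so that multiplication by it is invertible; but this follows directly from the explicit formula for $\mu$ given in the excerpt, where the degree-zero component of each Chern-root factor is $1$. Once componentwise bijectivity is in hand, I would assemble the summands: ${\it{Ch}}$ is an additive isomorphism because it is a direct sum of isomorphisms indexed by $g \in \CC$, and it is multiplicative by the preceding theorem. Combining additivity-plus-bijectivity with multiplicativity yields that ${\it{Ch}}$ is a ring isomorphism, completing the proof.
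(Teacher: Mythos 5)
Your proof is correct and takes essentially the same route as the paper: the paper's own justification is exactly that the ordinary Chern character becomes a $\integer/2$-graded rational isomorphism on each summand $K^*(M^g_G)\otimes\rational$ and that the classes $\mu(\tau_{M^g}\FF_g)$ are invertible, combined with the preceding theorem giving multiplicativity. (One trivial correction: with the paper's normalization $\mu(V)=\prod_i (1-e^{x_i})/x_i$ each factor has constant term $-1$ rather than $1$, but this is still a unit in $H^*(M^g_G;\rational)$, so your invertibility argument is unaffected.)
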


\subsection{Chen-Ruan Cohomology}

The Chen-Ruan cohomology is defined at the level of the quotient
spaces $M^g/G$ where the obstruction class is the Euler class of
the orbibundle $D_{g,h}/G$ over $M^{g,h}/G$ (see \cite{ChenRuan}).
Namely
$$H^*_{CR}([M/G]) := \bigoplus_{g \in \CC} H^*(M^g/G, \rational)$$
with the product structure given by $$\alpha \cdot \beta := e_{3*}
\left( e_1^* \alpha \ e_2^* \beta \ {\rm{eu}}(D_{g,h}/G)
\right).$$

As the projection maps $\pi_g : M^g_G \to M^g/G$ induce
isomorphisms in cohomology $$\pi_g^* : H^*(M^g/G , \rational)
\stackrel{\cong}{\to} H^*(M^g_G; \rational )$$ (because the
cohomology with rational coefficients of the fibers of $\pi_g$ are
acyclic), and they commute with pullbacks and pushforwards, we can
conclude

\begin{theorem}
The maps $\pi_g$ induce an isomorphism of rings
$$\pi^* : H^*_{CR}([M/G]) \to H^*_{orb}([M/G], \rational).$$
\end{theorem}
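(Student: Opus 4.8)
The plan is to exhibit $\pi^* = \bigoplus_{g\in\CC} \pi_g^*$ as the desired isomorphism. Bijectivity is essentially free: since $G$ acts almost freely the isotropy groups are finite, so each fiber of $\pi_g : M^g_G \to M^g/G$ is a rationally acyclic classifying space, whence each $\pi_g^*$ is an isomorphism on rational cohomology; the direct sum $\pi^*$ is then a graded $\rational$-module isomorphism respecting the grading by $\CC$. What remains is to show that $\pi^*$ intertwines the Chen-Ruan product $\cdot$ with the product $\circ$, i.e. that $\pi_{gh}^*(\alpha \cdot \beta) = \pi_g^*\alpha \circ \pi_h^*\beta$ for $\alpha \in H^*(M^g/G,\rational)$ and $\beta \in H^*(M^h/G,\rational)$.

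First I would record the two geometric compatibilities that drive the computation. Writing $\bar e_1,\bar e_2,\bar e_3$ for the evaluation inclusions $M^{g,h}/G \to M^g/G,\ M^h/G,\ M^{gh}/G$ and $e_1,e_2,e_3$ for their Borel-construction counterparts, the defining squares commute, so on cohomology $e_1^*\pi_g^* = \pi_{g,h}^*\bar e_1^*$, $e_2^*\pi_h^* = \pi_{g,h}^*\bar e_2^*$ and $e_3^*\pi_{gh}^* = \pi_{g,h}^*\bar e_3^*$. Moreover the square formed by $e_3$, $\bar e_3$ and the projections $\pi_{g,h},\pi_{gh}$ is Cartesian, since restricting $M^{gh}_G = M^{gh}\times_G EG$ over $M^{g,h}/G \subset M^{gh}/G$ recovers exactly $M^{g,h}_G$ ($M^{g,h}$ being $G$-invariant); hence base change for the Gysin map gives $\pi_{gh}^*\,\bar e_{3*} = e_{3*}\,\pi_{g,h}^*$. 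Second, I would identify the obstruction classes: the bundle $\tau_{M^{g,h}}(D_{g,h}) = D_{g,h}\times_G EG$ is the Borel construction of the orbibundle $D_{g,h}/G$, so its Euler class satisfies ${\rm{eu}}(\tau_{M^{g,h}}(D_{g,h})) = \pi_{g,h}^*\,{\rm{eu}}(D_{g,h}/G)$.

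With these two facts the verification is a direct substitution. Starting from $\alpha \cdot \beta = \bar e_{3*}\big(\bar e_1^*\alpha \cdot \bar e_2^*\beta \cdot {\rm{eu}}(D_{g,h}/G)\big)$, I apply $\pi_{gh}^*$, commute it past $\bar e_{3*}$ via base change to produce $e_{3*}\,\pi_{g,h}^*$, distribute $\pi_{g,h}^*$ over the cup product (it is a ring homomorphism for the ordinary product), and rewrite $\pi_{g,h}^*\bar e_i^*$ as $e_i^*\pi_g^*$ (resp. $e_i^*\pi_h^*$) together with $\pi_{g,h}^*\,{\rm{eu}}(D_{g,h}/G) = {\rm{eu}}(\tau_{M^{g,h}}(D_{g,h}))$. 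The outcome is $e_{3*}\big(e_1^*(\pi_g^*\alpha)\cdot e_2^*(\pi_h^*\beta)\cdot {\rm{eu}}(\tau_{M^{g,h}}(D_{g,h}))\big)$, which is precisely $\pi_g^*\alpha \circ \pi_h^*\beta$ by the definition of $\circ$. Combined with bijectivity, this makes $\pi^*$ a ring isomorphism.

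The step I expect to require the most care is the pushforward compatibility $\pi_{gh}^*\,\bar e_{3*} = e_{3*}\,\pi_{g,h}^*$: one must check that the relevant square is genuinely Cartesian and that the orbifold Gysin map corresponds to the Borel Gysin map under $\pi^*$, which amounts to matching Thom classes. This is exactly where the almost-free hypothesis enters, since it guarantees that the normal bundle of $M^{g,h}\subset M^{gh}$ descends to an orbibundle whose Borel construction is the normal bundle of $M^{g,h}_G\subset M^{gh}_G$. Everything else is formal given the stated pullback/pushforward compatibilities of the projections $\pi_g$.
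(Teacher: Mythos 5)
Your proposal is correct and follows essentially the same route as the paper: the paper's (very terse) proof rests on exactly your two ingredients, namely that the $\pi_g$ are rational cohomology isomorphisms commuting with pullbacks and pushforwards, and the key identification $\pi_{g,h}^*(D_{g,h}/G)\cong \tau_{M^{g,h}}D_{g,h}$, which matches the two Euler classes. You have simply written out in full the base-change and substitution steps that the paper leaves implicit.
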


\begin{proof}
The isomorphism follows from the fact that $$\pi_{g,h}^*
(D_{g,h}/G )\cong \tau_{M^{g,h}} D_{g,h}.$$
\end{proof}

Composing the ring maps $\tau$, ${\it{Ch}}$ and $\pi^*$ we obtain
\begin{cor}
The composition $$\pi^* \circ {\it{Ch}} \circ \tau :
K_{st}^*([M/G]) \to H^*_{CR}([M/G])$$ is a ring homomorphism.
\end{cor}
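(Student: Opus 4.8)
The plan is to observe that the corollary is an immediate consequence of the three results that immediately precede it, since each of the three constituent maps has already been shown to respect the relevant ring structures. First I would simply recall the three facts in the order they were proved: the map $\tau : (K_{st}^*([M/G]), \star) \to (K_{orb}^*([M/G]), \bullet)$ was shown to be a ring homomorphism; the orbifold Chern character ${\it{Ch}} : (K_{orb}^*([M/G]), \bullet) \to (H_{orb}^*([M/G]; \rational), \circ)$ was shown to be a ring homomorphism; and $\pi^* : H_{CR}^*([M/G]) \to H_{orb}^*([M/G], \rational)$ was shown to be an isomorphism of rings. The corollary then asserts nothing more than that the appropriate composite of these is multiplicative, so the proof amounts to assembling what is already in hand.

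The one point that requires a word of care is the direction of $\pi^*$. Since $\pi^*$ has target $H_{orb}^*([M/G],\rational)$ rather than source, to obtain a map landing in $H_{CR}^*([M/G])$ I would compose with its inverse. Because $\pi^*$ is a ring isomorphism, its inverse $(\pi^*)^{-1} : H_{orb}^*([M/G], \rational) \to H_{CR}^*([M/G])$ is again a ring homomorphism. Thus the composite $(\pi^*)^{-1} \circ {\it{Ch}} \circ \tau$ is a composition of three ring homomorphisms, hence itself a ring homomorphism, which is exactly the claim. I would read the displayed composition $\pi^* \circ {\it{Ch}} \circ \tau$ in the statement with $\pi^*$ interpreted through its inverse, which is legitimate precisely because it was established to be an isomorphism.

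There is essentially no obstacle at this stage: all of the genuine content was already dispatched in proving the multiplicativity of each factor separately, namely the compatibility of $\tau$ with $\star$ and $\bullet$ coming from the properties of the maps $\tau_N$, the calibration identity of corollary \ref{corollary mu} that renders ${\it{Ch}}$ multiplicative, and the acyclicity argument identifying $\pi_{g,h}^*(D_{g,h}/G)$ with $\tau_{M^{g,h}} D_{g,h}$ that makes $\pi^*$ a ring map. The present corollary merely concatenates these three facts, using only the trivial observation that a composite of ring homomorphisms is a ring homomorphism; the sole subtlety worth flagging is the inversion of $\pi^*$ described above.
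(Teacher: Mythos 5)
Your proposal is correct and matches the paper's own argument, which likewise obtains the corollary by simply composing the three previously established ring maps $\tau$, ${\it{Ch}}$ and $\pi^*$. Your remark that one must really use $(\pi^*)^{-1}$ to land in $H^*_{CR}([M/G])$ is a sound reading of the paper's slight abuse of notation and does not constitute a different approach.
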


\begin{example}
{The case of $\complex P [p:1: \dots :1]$}

In  the case of the weighted projective orbifold that we elaborated
in section \ref{section example}, we just need to disregard the
coordinate of the cyclotomic extension of theorem \ref{theorem
stringy Ktheory weighted proj}; then we have that
$$K_{orb}^*([M/G]) \otimes \rational \cong \rational[u,g]/\langle
(1-u)^{n+1}, g^p - (1-u)^n , g^{p+1} \rangle.$$

Applying the orbifold Chern character, which in this case is
simply the Chern character map, we obtain the
 Chen-Ruan orbifold cohomology of the
weighted projective space:
$$H_{orb}^*(\complex P[p:1:\dots :1]) \cong \rational[x,g]/ \langle x^{n+1}, g^p-x^n,g^{p+1} \rangle ,  $$
 where ${\rm{ch}}(1-u)^n=x^n$.
\end{example}

\section{Stringy product on twisted orbifold K-theory}

This chapter is devoted to study some properties of the stringy
product on twisted orbifold K-theory for orbifolds of the type
$[M/G]$ where $G$ is a abelian finite group.

We will first recall the definition of the twisted orbifold
K-theory and its stringy product following \cite{AdemRuanZhang}.
Then we will show that the twistings used for the stringy product,
that come from the inverse transgression, are all torsion. In the case that
the twistings come from the cohomology of the group $G$,   we will
show a decomposition formula that is very suited to study the
stringy product. This decomposition formula can be seen as a
hybrid between the ring decomposition of equivariant K-theory (see
\cite[Thm 5.1]{AdemRuan}) and the group decomposition of twisted
K-theory (see \cite[Thm. 7.4]{AdemRuan}. Finally we will show an
explicit calculation of the stringy product of twisted orbifold
K-theory on which the twistings are non-trivial.

\subsection{Stringy product on twisted orbifold K-theory}
In this section we will review the definition of the twisted
orbifold K-theory. For the details on the construction see
\cite{AdemRuan, AdemRuanZhang}.

An element $\beta \in H^3_G(M; \integer)$ defines
$\tK{\beta}_G^*(M)$, the twisted $G$-equivariant K-theory of $M$.
If $\beta$ comes from $H^3(BG; \integer)$, then $\beta$ defines a
central extension of $G$ by the circle $$ 1 \to S^1 \to
\widetilde{G} \to G \to 0$$ and the twisted $G$-equivariant
K-theory $\tK{\beta}^*_G(M)$ becomes the group of
$\widetilde{G}$-equivariant vector bundles over $M$ such that
$S^1$ acts by multiplication on the fibers.

For $\alpha \in H^4_G(M;\integer)$ the stringy product of the
twisted orbifold K-theory $\tK{\alpha}_{st}^*([M/G])$ is defined
as the group
$$\tK{\alpha}_{st}^*([M/G]):= \bigoplus_{g \in G}
\tK{\alpha_g}_{G}^*(M^g) \times \{g\}$$ where $\alpha_g \in
H^3_{G}(M^g; \integer)$ is a twisting class that is the image of
$\alpha$ under the inverse transgression map $\tau_g :H^4_G(M;\integer)
\to H^3_{G}(M^g; \integer)$ (see \ref{transgression map}).

Let's denote the inclusions by
$$e_1 : M^{g,h} \to M^g \ \ \ \ \ e_2: M^{g,h} \to M^h \ \ \ \ \ \ e_3: M^{g,h} \to M^{gh}$$
and recall that $D_{g,h} \in K_G^*(M^{g,h})$ is the obstruction
bundle of definition \ref{Obstruction bundle}.

\begin{definition} \label{definition twisted stringy product}
For the almost complex orbifold $[M/G]$,  define the stringy
product  $\star$ in the twisted orbifold K-theory
$\tK{\alpha}_{st}^*([M/G])$ as follows:
\begin{eqnarray*}
 (\tK{\alpha_g}^*_G(M^g)\times\{g\}) \times
(\tK{\alpha_h}^*_G(M^h) \times \{h\}) & \stackrel{\star}{\to} &
\tK{\alpha_{gh
}}^*_G(M^{gh})\times\{gh\} \\
((F,g),(H,h)) & \mapsto & (e_{3!}(e_1^*F \otimes e_2^*H \otimes
\lambda_{-1}(D_{g,h})),gh).
\end{eqnarray*}
\end{definition}

The proof of the fact that the $\star$ operation makes
$\tK{\alpha}_{st}^*([M/G])$ into a ring can be found in
\cite{AdemRuanZhang}. The product is well defined because
$\alpha_g + \alpha_h = \alpha_{gh}$ and the associativity of
$\star$ follows from the properties of the obstruction bundles as
in section \ref{subsection obstruction bundle}.

\subsection{Inverse transgression map}

In this section we will show that the transgressed classes
$\alpha_g$ are all torsion.

Let us recall the definition of the inverse transgression map for the
orbifold $[M/G]$ when $G$ is finite not necessarily abelian.

For $g \in G$ let $C(g)$ denote the centralizer of $g$ in $G$.
Consider the action of $C(g) \times \integer$ on $M^g$ given by
$(h,m)\cdot x := x hg^m$ and the homomorphism
\begin{eqnarray*} \phi_g : C(g)
\times  \integer &  \to  & G\\
(h,m) & \mapsto & hg^m.
\end{eqnarray*} Then the inclusion $i_g : M^g \to M$ becomes a
$\phi_g$-equivariant map and it induces a homomorphism
$$i_g^* : H_G^*(M; \integer) \to H^*_{C(g) \times Z}(M^g ; \integer).$$
As the group $\integer$ acts trivially on $M^g$ then we have that
$$H^*_{C(g) \times Z}(M^g ; \integer) \cong H^*(M^g
\times_{C(g)}EC(g) \times B\integer ; \integer),$$ and as
$B\integer \simeq S^1$, via the Kunneth isomorphism we get that
$$H^*_{C(g) \times Z}(M^g ; \integer) \cong H^*_{C(g)}(M^g;
\integer) \otimes_\integer H^*(S^1 ; \integer).$$

Therefore $$i_g^* : H^p_G(M ; \integer) \to H^p_{C(g)}(M^g
;\integer) \oplus H^{p-1}_{C(g)}(M^g ;\integer)$$ and projecting
on the second summand we get the inverse transgression map:
\begin{eqnarray} \label{transgression map} \tau_g : H^p_G(M; \integer) \to H^{p-1}_{C(g)}(M^g;
\integer).\end{eqnarray}

\begin{lemma}
The transgressed classes are all torsion, i.e. for $\alpha \in
H^p_G(M; \integer)$ the inverse transgression $\alpha_g : = \tau_g(\alpha)
\in H^{p-1}_{C(g)}(M^g; \integer)$ is a torsion class.
\end{lemma}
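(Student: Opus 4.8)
The plan is to exploit the factorization of the inverse transgression map through the $S^1$-factor in $B\integer \simeq S^1$, and to observe that any class pulled back from $H^*(S^1;\integer)$ in positive degree is automatically torsion after the Künneth decomposition. The key structural fact is that $\tau_g$ is, by construction in \eqref{transgression map}, the composite of the restriction $i_g^*$ with projection onto the $H^{p-1}_{C(g)}(M^g;\integer)\otimes H^1(S^1;\integer)$ summand of the Künneth decomposition of $H^*_{C(g)\times\integer}(M^g;\integer)$. The class $\alpha_g$ therefore records the ``slant'' of $i_g^*\alpha$ along the circle direction coming from the $\integer$-action.

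First I would make precise the source of the torsion. The $\integer$-factor acts trivially on $M^g$, so the classifying space contributes a factor $B\integer\simeq S^1$, and $H^1(S^1;\integer)\cong\integer$. The subtlety is that the homomorphism $\phi_g\colon C(g)\times\integer\to G$ sends the generator $(e,1)$ to $g$, an element of finite order, say $n=\mathrm{ord}(g)$. I would therefore compare the $C(g)\times\integer$-equivariant structure with the one obtained by replacing $\integer$ with the finite cyclic group $\integer/n$ generated by $g$. Concretely, the map $\phi_g$ factors through $C(g)\times(\integer/n)$ because $g^n=e$, and the action of $C(g)\times\integer$ on $M^g$ descends to an action of $C(g)\times(\integer/n)$. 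This gives a factorization of $i_g^*$ through $H^*_{C(g)\times(\integer/n)}(M^g;\integer)$.

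Second, I would run the Künneth argument one level up with $\integer/n$ in place of $\integer$. Since $B(\integer/n)$ is a lens-space-type space, its reduced integral cohomology $\widetilde H^*(B(\integer/n);\integer)$ is entirely $n$-torsion in every positive degree. Under the projection $B\integer\to B(\integer/n)$ inducing $H^*(B(\integer/n);\integer)\to H^*(B\integer;\integer)=H^*(S^1;\integer)$, the degree-one part $H^1(B(\integer/n);\integer)\cong\integer/n$ maps to $H^1(S^1;\integer)\cong\integer$; but this map is forced to be zero on the torsion, so the honest slant must arise from the $\integer/n$-equivariant picture, where the relevant cohomology of the classifying factor is $n$-torsion. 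Consequently the component of $i_g^*\alpha$ that $\tau_g$ extracts is the image of a class living in a group all of whose positive-degree elements are annihilated by $n$, and hence $n\cdot\alpha_g=0$.

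The hard part will be making the comparison between the $\integer$-model and the $\integer/n$-model fully rigorous at the cochain level: one must check that the projection onto the $H^1(S^1)$-summand that \emph{defines} $\tau_g$ genuinely receives its value from the torsion subgroup coming from $B(\integer/n)$, rather than from a free $\integer$-contribution that the factorization through $\phi_g$ has already killed. I expect this to hinge on a careful diagram chase comparing the Künneth splitting for $C(g)\times\integer$ with the Serre spectral sequence (or Künneth formula with universal coefficients) for $C(g)\times(\integer/n)$, using that $\phi_g$ respects both and that the generator of $\integer$ maps to the $n$-torsion generator of $\integer/n$. Once that compatibility is established, multiplying $\alpha_g$ by $n=\mathrm{ord}(g)$ yields zero, proving that $\alpha_g$ is torsion (indeed annihilated by the order of $g$).
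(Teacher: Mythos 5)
Your proposal follows essentially the same route as the paper's proof: factor $\phi_g$ through $C(g)\times\langle g\rangle$ using that $g$ has finite order, and then use the K\"unneth decomposition together with the fact that $\widetilde{H}^*(B\langle g\rangle;\integer)$ is all torsion to conclude that the component extracted by $\tau_g$ cannot carry any torsion-free part. One small slip: $H^1(B(\integer/n);\integer)=0$ rather than $\integer/n$ (the integral cohomology of a finite cyclic group is concentrated in even positive degrees), but this does not harm the argument --- it only reinforces your point that nothing can map onto the free summand $H^1(S^1;\integer)$, and your sharper conclusion that $\alpha_g$ is annihilated by $\mathrm{ord}(g)$ is indeed what this argument yields.
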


\begin{proof}
Let $\langle g \rangle$ denote the cyclic group generated by $g$.
Then the homomorphism $\phi_g$ factors through
$$C(g) \times \integer \to C(g) \times \langle g \rangle \to G  \
\ \ \ \ \ \ \ \ (h,m) \mapsto (h, g^m) \mapsto hg^m$$ and
therefore $i_g^*$ factors through $$H^*_G(M; \integer) \to H^*(M^g
\times_{C(g)} EC(g) \times B \langle g \rangle ; \integer) \to
H^*(M^g \times_{C(g)} EC(g) \times B \integer ; \integer).$$

Now, as the cohomology of $B \langle g \rangle$ is all torsion,
the Kunneth isomorphism tells us that the torsion free part of
$H^p(M^g \times_{C(g)} EC(g) \times B \langle g \rangle ;
\integer)$ comes from $$H^p_{C(g)}(M^g ; \integer)
\otimes_\integer H^0(B \langle g \rangle ; \integer).$$ As the
inverse transgression map does not factor through $H^p_{C(g)}(M^g ;
\integer) \otimes_\integer H^0(B \langle g \rangle ; \integer)$,
then the transgressed classes must be torsion.

\end{proof}

\begin{example}
Let's calculate the inverse transgression map for the group $G=(
\integer/2)^3$; here we follow the analysis of lemma 5.2 in
\cite{AdemRuanZhang}.

 Let's suppose first that the coefficients
are in the field $\field_2$ of two elements. Then we have that
$$H^*(G; \field_2) \cong \field_2 [x_1, x_2, x_3]$$ is the
polynomial algebra on three generators. Take  $g = (a_1, a_2, a_3)
\in G$ and consider the maps
$$G \times \integer \to G \times \integer/2
\to G \ \ \ \ \ \
(h,m) \mapsto (h, mg) \mapsto h + mg.$$
\end{example}

In cohomology we get \begin{eqnarray*}& H^*(BG, \field_2)  \to
H^*(BG \times B \integer/2 ; \field_2) \to H^*(BG \times B\integer ; \field_2)& \\
& \field_2[x_1,x_2,x_3]  \to  \field_2[x_1,x_2,x_3,w] \to  \field_2[x_1,x_2,x_3,w]/(w^2) & \\
 & x_i \ \  \mapsto \ \ x_i + a_i w \ \ \mapsto \ \  x_i + a_i w &
\end{eqnarray*}
and therefore the inverse transgression map with $\field_2$ coefficients
is
\begin{eqnarray*}
& \tau_g : \field_2[x_1,x_2,x_3]   {\to}  \field_2[x_1,x_2,x_3] & \\
& x_1^{m_1} x_2^{m_2} x_3^{m_3}  \mapsto \left(
a_1m_1x_1^{m_1-1}x_2^{m_2} x_3^{m_3} + a_2m_2x_1^{m_1} x_2^{m_2-1}
x_3^{m_3} + a_3m_3x_1^{m_1} x_2^{m_2} x_3^{m_3-1}\right). &
\end{eqnarray*}

For abelian 2-groups, the map $H^k(BG ; \integer) \to H^k (BG ;
\field_2) $ is a monomorphism, and as $H^k(BG ; \integer) \cong
H^k(BG ; \integer/4)$, then the long exact sequence induced by the
extension $0 \to \integer/2 \to \integer/4 \to \integer/2 \to 0$
implies that $$H^k(BG; \integer) \cong \mbox{kernel} \left( Sq^1 :
H^k(BG ; \field_2) \to H^{k+1}(BG; \field_2) \right);$$ here $Sq^1$ is the
Steenrod square operation that in this case is equivalent to the
Bockstein map.

If we want to find elements in $H^4(BG ; \integer)$ that induce
non trivial twistings via the inverse transgression map, one needs to find
a homogenous polynomial $p(x_1, x_2, x_3) \in
\field_2[x_1,x_2,x_3]$ of degree 4, such that $Sq^1(p) =0$ and
that $\tau_g(p) \neq 0$ for some $g \in G$. In \cite[Lemma
5.2]{AdemRuanZhang} it is shown that the element
\begin{eqnarray} \label{definition of alpha}
\alpha=Sq^1(x_1x_2x_3) = x_1^2x_2x_3 + x_1x_2^2x_3 +x_1x_2x_3^2
\end{eqnarray}
satisfies these properties. Its inverse transgression in $\tau_g: H^4(BG
;\integer) \to H^3(BG, \integer)$ for $g=(a_1,a_2,a_3)$ is equal
to
\begin{eqnarray} \label{transgression alpha}
\alpha_g = \tau_g(\alpha) = a_1(x_2^2 x_3 + x_2x^2_3) +a_2(x_3^2
x_1 + x_3x^2_1)+a_3(x_1^2 x_2 + x_1x^2_2).
\end{eqnarray}
 Let us calculate the
double inverse transgression of $\alpha$.

\begin{lemma} \label{lemma double trasngression alpha}
Let $g= (a_1, a_2, a_3)$ and $h = (b_1, b_2, b_3)$ be elements in
$G= ( \integer/2)^3$. The double inverse transgression of $\alpha$ is
equal to:
\begin{eqnarray*}
\tau_h \tau_g \alpha & = & (a_2 b_3 + a_3 b_2) x_1^2 +(a_3 b_1 +
a_1 b_3) x_2^2 + (a_1 b_2 + a_2 b_1) x_3^2\\
& = & \left[(a_1,a_2,a_3) \times (b_1,b_2,b_3) \right] \cdot
(x_1^2,x_2^2, x_3^2).
\end{eqnarray*}
and therefore  $\tau_h \tau_g \alpha \neq 0$ if and only if $g
\neq h$ and $g \neq 0 \neq h$.
\end{lemma}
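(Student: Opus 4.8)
The plan is to compute $\tau_h \tau_g \alpha$ directly by feeding the class $\alpha_g = \tau_g(\alpha)$ of formula (\ref{transgression alpha}) into the explicit inverse transgression formula for $\tau_h$ derived just above. Writing $g = (a_1,a_2,a_3)$ and $h = (b_1,b_2,b_3)$, I would apply $\tau_h$ monomial by monomial to the six cubic monomials appearing in $\alpha_g$, namely $x_2^2 x_3$, $x_2 x_3^2$, $x_1 x_3^2$, $x_1^2 x_3$, $x_1^2 x_2$ and $x_1 x_2^2$ (each carrying its coefficient $a_i$), where $\tau_h$ sends $x_1^{m_1} x_2^{m_2} x_3^{m_3}$ to $b_1 m_1 x_1^{m_1-1} x_2^{m_2} x_3^{m_3} + b_2 m_2 x_1^{m_1} x_2^{m_2-1} x_3^{m_3} + b_3 m_3 x_1^{m_1} x_2^{m_2} x_3^{m_3-1}$.

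The key simplification, and the one place where it is easy to slip, is that all arithmetic takes place over $\field_2$, so each multiplicity $m_i$ weighting a term is reduced modulo $2$. In every one of the six monomials exactly one variable occurs to the first power and one to the second power, so the two summands coming from the exponent-$2$ variable carry a factor $2 \equiv 0$ and vanish, while the summand coming from the exponent-$1$ variable carries a factor $1$ and survives. Thus each application of $\tau_h$ collapses a cubic monomial to a single quadratic monomial; for instance $\tau_h(x_2^2 x_3) = b_3 x_2^2$ and $\tau_h(x_1^2 x_2) = b_2 x_1^2$. Summing the six contributions and collecting the coefficient of each $x_i^2$ then yields $(a_2 b_3 + a_3 b_2) x_1^2 + (a_3 b_1 + a_1 b_3) x_2^2 + (a_1 b_2 + a_2 b_1) x_3^2$, which is the first line of the asserted formula.

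Finally I would recognize the coefficient triple $(a_2 b_3 + a_3 b_2,\ a_3 b_1 + a_1 b_3,\ a_1 b_2 + a_2 b_1)$ as the cross product $g \times h$ computed over $\field_2$, where the usual minus signs become plus signs; pairing it with $(x_1^2, x_2^2, x_3^2)$ gives the compact second line of the statement. For the non-vanishing criterion, I would observe that $\tau_h \tau_g \alpha = 0$ exactly when $g \times h = 0$ in $\field_2^3$, i.e.\ when $g$ and $h$ are linearly dependent over $\field_2$. Since the only nonzero scalar in $\field_2$ is $1$, two vectors are dependent precisely when one of them is zero or the two are equal, so $g \times h \neq 0$ if and only if $g \neq 0$, $h \neq 0$ and $g \neq h$, which is the claimed equivalence. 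I do not anticipate any genuine obstacle: the substance is the bookkeeping of the mod-$2$ reductions, and the only point demanding care is the systematic vanishing of the exponent-$2$ terms.
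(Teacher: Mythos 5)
Your proposal is correct and follows essentially the same route as the paper: the paper's proof likewise rests on the single computation $\tau_h(x_i^2 x_j) = b_j x_i^2$ (the mod-$2$ vanishing of the exponent-$2$ terms) applied to the six monomials of $\alpha_g$, followed by the observation that the $\field_2$ cross product vanishes exactly when $g=0$, $h=0$, or $g=h$. Your write-up merely spells out the monomial-by-monomial bookkeeping and the linear-dependence justification that the paper leaves implicit.
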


\begin{proof}
The formula follows from the fact that the inverse transgression  $\tau_h
(x_i^2x_j)$ is $b_jx_i^2$. Now, the cross product $(a_1,a_2,a_3)
\times (b_1,b_2,b_3)$ is equal to zero only when either $g=0$ or
$h=0$, or $g=h$.
\end{proof}

\subsection{Decomposition formula for twisted orbifold K-theory}

This section is devoted to show a decomposition formula for the
twisted orbifold K-theory that is suited to study the stringy
product; this formula is a simple generalization of \cite[Thm.
7.4]{AdemRuan}. On what follows we will elaborate on the results
of sections 5 and 6 of \cite{AdemRuan}.

Let's assume that $G$ is abelian and that $\beta : G \times G \to
S^1$ is a cocycle representing the twisting class which correspond
to the group extension $1 \to S^1 \to \widetilde{G}_\beta \to G
\to 0$.
 Therefore $\widetilde{G}_\beta$  is isomorphic to the group $G \times
 S^1$ with the product given by $(g_1, c_1) (g_2,c_2) = (g_1g_2,
 \beta(g_1,g_2)c_1c_2).$

 We will start by recalling some fact about projective representations that can be
 found on \cite{AdemRuan}. Recall that the group $R_\beta(G):= \tK{\beta}_G^*(pt)$ of $\beta$-twisted
 representations can be understood as the subgroup of
 $R(\widetilde{G}_\beta)$ generated by representations where $S^1$
 acts by scalar multiplication.

 To understand the group $R_\beta(G)$ we will restrict it to all
 the cyclic subgroups of $G$; let's do this choosing a generator on each cyclic subgroup.
 Then, for $g$ in $G$ and $C=\langle g \rangle$ the cyclic group it
 generates, by restriction we have a map $R_\beta(G) \to
 R_{res(\beta)}(\langle g \rangle)$ where $res(\beta)$ is the
 restriction of $\beta$ to the group $\langle g \rangle$. As $H^2(\langle g
 \rangle; S^1)=0$ we have that $res(\beta)$ is cohomologous to
 zero, and therefore $R_{res(\beta)}(\langle g \rangle) \cong
 R(\langle g \rangle)$ the untwisted representation ring of $\langle g \rangle$.

Now, these restrictions to the cyclic subgroups are endowed with
an action of the group $G$; for this we need to make use of the
inverse transgression of the cocycle $\beta$.
 The inverse transgression of $\beta$ for $h \in G$ is a 1-cocycle, and
 therefore is given by a
 homomorphism $\tau_h \beta : G \to S^1$, where $g\mapsto
 \beta(h,g)\beta(g,h)^{-1}$ (see \cite[Lemma 6.4.1]{LupercioUribeLoopGroupoid}).
  Note that these homomorphisms also have the property that $\tau_{h}(g) \cdot
 \tau_{k}(g) = \tau_{hk}(g)$.

\vspace{0.5cm}

 Having the inverse transgression map in mind, we can show now the natural action of ${G}$ on
 $R_{res(\beta)}(\langle g \rangle)$. So, given a representation $$\rho
 : \widetilde{\langle g \rangle}_\beta \to Aut(V),$$ for $(x,b) \in \widetilde{\langle g
 \rangle}_\beta$ and $z \in {G}$ we can define
 the representation $$\left[ z \cdot \rho \right] :\widetilde{\langle g \rangle}_\beta \to
 Aut(V)$$
 such that
 $$\left[ z \cdot \rho \right]( x, b) := \rho(
 (z,1)(x,b)(z,1)^{-1}).$$ Note that this representation has the property that \begin{eqnarray}
 \label{G equivariantness} \left[ z \cdot \rho \right]( x,
 b)= \tau_x(z) \rho(x,b)\end{eqnarray} and therefore as $\tau_x$ is a homomorphism one has that $[z_2 \cdot
 [z_1 \cdot \rho]] = [z_1z_2 \cdot \rho]$.
This shows that there is a natural action of $G$ on
$R_{res(\beta)}(\langle g \rangle)$.

We have that the class $res(\beta)$ is trivial in cohomology,
therefore there is a function $\gamma : G \to S^1$ such that
$\delta \gamma = \beta$. Then we can make a $res(\beta)$-twisted
representation of $\langle g \rangle$ into a representation of
$\langle g \rangle$ in the following way. A $res(\beta)$-twisted
representation of $\langle g \rangle$ is given by a function $\psi
: \langle g \rangle  \to Aut(V)$ where $\psi(hk) =
[res(\beta)(h,k)]\psi(h)\psi(k)$. But as $res(\beta)(h,k)=
\gamma(k) \gamma(hk)^{-1} \gamma(h)$  we have that
$\gamma(hk)\psi(hk) = \gamma(h)\psi(h)\gamma(k)\psi(k)$ and
therefore $\gamma(\cdot)\psi(\cdot)$ becomes a $\langle g \rangle$
representation and we get the isomorphism
$$A_{\beta,g} : R_{res(\beta)}(\langle g \rangle) \cong R(\langle g\rangle); \ \ \ \ \
\psi \mapsto \gamma\psi.$$

Now take $\zeta$ to be a $|g|$-primitive root of unity,
$\rational(\zeta)$ the cylotomic field it generates and
 $$\chi_g: R(\langle g \rangle) \to \rational(\zeta)$$ the trace map
on the element $g$, i.e. $\chi_g \rho := \mbox{Trace}(\rho(g))$.
Endow the group $R(\langle g \rangle)$ with the $G$ action coming
from the isomorphism $A_{\beta,g }$ and the cyclotomic field
$\rational(\zeta)$ with the $G$ action given by multiplication
with $\tau_g\beta$, i.e. for $h \in G$ and $p(\zeta) \in
\rational(\zeta)$ let $h\cdot p(\zeta) = \tau_g\beta(h) p(\zeta)$.
In view of equation (\ref{G equivariantness}) the maps
$A_{\beta,g}$ and $\chi_g$ become $G$-equivariant.

As the $G$ action on $R(\langle g \rangle)$ and $\rational(\zeta)$ depends on $\beta$, we will
label these groups by $R(\langle g \rangle)_\beta$ and $\rational(\zeta)_\beta $; the subindex $\beta$
keeps track on how the $G$ action is defined. Let's now show the result on which the calculation of the
stringy product is based.

\begin{lemma}\label{lemma tensor product restrictred reps}
Let $\beta_1,\beta_2 : G \times G \to S^1$ be two 2-cocycles
representing two twisting classes. Then the diagram $$ \xymatrix{
R_{res(\beta_1)}(\langle g\rangle) \times R_{res(\beta_2)}(\langle
g\rangle) \ar[rr]^\otimes \ar[d]_{A_{\beta_1,g} \times
A_{\beta_2,g}} & & R_{res(\beta_1\beta_2)}(\langle g\rangle)
\ar[d]^{A_{\beta_1\beta_2,g}}\\
R(\langle g \rangle)_{\beta_1} \times  R(\langle g \rangle)_{\beta_2} \ar[d]_{\chi_g \times \chi_g} \ar[rr]^\otimes
&& R(\langle g \rangle)_{\beta_1\beta_2} \ar[d]^{\chi_g} \\
\rational(\zeta)_{\beta_1} \times \rational(\zeta)_{\beta_2} \ar[rr]^\cdot &&\rational(\zeta)_{\beta_1\beta_2}
 }$$ is commutative and
$G$-equivariant
\end{lemma}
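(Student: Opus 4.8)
The plan is to treat the diagram as two stacked squares and to verify, first, that each square commutes and, second, that the three horizontal maps are $G$-equivariant; the three vertical maps $A_{\beta_i,g}$ and $\chi_g$ have already been shown to be $G$-equivariant in the discussion preceding the lemma (the target actions were defined precisely so that this holds), so nothing new is needed for them.

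First I would fix the coboundary data carefully, since $A_{\beta,g}$ depends on the chosen splitting. As $res(\beta_i)$ is trivial in $H^2(\langle g\rangle;S^1)$, choose $\gamma_i$ with $\delta\gamma_i = res(\beta_i)$ for $i=1,2$; then $\delta(\gamma_1\gamma_2)=res(\beta_1\beta_2)$, so I take $\gamma_1\gamma_2$ as the splitting defining $A_{\beta_1\beta_2,g}$. The upper square then commutes on the nose: the tensor product of a $res(\beta_1)$-twisted representation $\psi_1$ and a $res(\beta_2)$-twisted representation $\psi_2$ is $res(\beta_1\beta_2)$-twisted, and
$$A_{\beta_1\beta_2,g}(\psi_1\otimes\psi_2) = (\gamma_1\gamma_2)(\psi_1\otimes\psi_2) = (\gamma_1\psi_1)\otimes(\gamma_2\psi_2) = A_{\beta_1,g}(\psi_1)\otimes A_{\beta_2,g}(\psi_2),$$
the middle equality using only that the scalar function $\gamma_1\gamma_2$ acts diagonally and that scalars distribute across $\otimes$. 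The lower square is the multiplicativity of the character under tensor products: evaluating at $g$,
$$\chi_g(\rho_1\otimes\rho_2) = \mathrm{Trace}\big((\rho_1\otimes\rho_2)(g)\big) = \mathrm{Trace}(\rho_1(g))\,\mathrm{Trace}(\rho_2(g)) = \chi_g(\rho_1)\,\chi_g(\rho_2).$$

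For the $G$-equivariance of the horizontal maps the single key input is the additivity of the inverse transgression in the cocycle variable: since $(\beta_1\beta_2)(x,z)=\beta_1(x,z)\beta_2(x,z)$, one gets $\tau_x(\beta_1\beta_2)(z)=\tau_x\beta_1(z)\,\tau_x\beta_2(z)$ and likewise $\tau_g(\beta_1\beta_2)(h)=\tau_g\beta_1(h)\,\tau_g\beta_2(h)$. Using equation (\ref{G equivariantness}), the top tensor map is $G$-equivariant because
$$[z\cdot(\psi_1\otimes\psi_2)](x)=\tau_x(\beta_1\beta_2)(z)\,(\psi_1\otimes\psi_2)(x)=[z\cdot\psi_1](x)\otimes[z\cdot\psi_2](x),$$
and the bottom multiplication map is $G$-equivariant because
$$h\cdot(p_1p_2)=\tau_g(\beta_1\beta_2)(h)\,p_1p_2=(\tau_g\beta_1(h)\,p_1)(\tau_g\beta_2(h)\,p_2)=(h\cdot p_1)(h\cdot p_2).$$
The middle tensor map is then $G$-equivariant by transporting the top one along the $G$-equivariant vertical isomorphisms $A_{\beta_i,g}$, using the already-established commutativity of the upper square.

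The hard part, and the one point I would watch most closely, is the compatibility of the coboundary choices: because $A_{\beta,g}$ depends on $\gamma$, the upper square commutes strictly only once the splitting for $\beta_1\beta_2$ is taken to be the product $\gamma_1\gamma_2$; for any other choice the two composites differ by a canonical scalar (the coboundary of the discrepancy function), which would then have to be tracked through $\chi_g$. Everything else is formal bookkeeping with the additivity of the inverse transgression and the multiplicativity of the trace, so I expect the proof to be short once the splittings are chosen coherently.
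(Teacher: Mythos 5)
Your proposal is correct and follows essentially the same route as the paper's proof: commutativity of the top square via the splitting $\gamma_1\gamma_2$ of $res(\beta_1\beta_2)$, the bottom square via multiplicativity of the trace, and $G$-equivariance via the multiplicativity of the inverse transgression in the cocycle variable together with equation (\ref{G equivariantness}). Your extra care about the coherence of the coboundary choices (taking $\gamma_1\gamma_2$ as the splitting for $\beta_1\beta_2$) is a point the paper uses implicitly rather than flagging, but it is the same argument.
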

\begin{proof}
Let $\psi_i : \langle g \rangle \to Aut(V_i)$ be an element in
$R_{res(\beta_i)}(\langle g\rangle)$ and let $\gamma_i: G \to S^1$
be the function such that $\delta \gamma_i = \beta_i$. We have
that $A_{\beta_i,g}(\psi_i) = \gamma_i \psi_i$ and therefore
$$A_{\beta_1,g}(\psi_1) \otimes A_{\beta_2,g}(\psi_2) = \gamma_1
\psi_1 \otimes \gamma_2 \psi_2 = \gamma_1\gamma_2 ( \psi_1 \otimes
\psi_2)= A_{\beta_1\beta_2,g}(\psi_1 \otimes \psi_2)$$ where the
last equality follows from the fact that
$\delta(\gamma_1\gamma_2)=\beta_1\beta_2$.

The commutativity of the second diagram follows from the fact that
$$\mbox{Trace}(A\otimes B)=\mbox{Trace}(A)\mbox{Trace}(B).$$

Now, for $h \in G$ we have that
$$(h \cdot \rho_1) \otimes (h \cdot \rho_2) = \tau_g\beta_1(h)
\rho_1 \otimes \tau_g\beta_2(h) \rho_2= \tau_g\beta_1\beta_2(h)
\rho_1 \otimes \rho_2= h \cdot( \rho_1 \otimes \rho_2)$$ where the
middle equality follows from the fact that $\tau_g$ is a
homomorphism. This implies the equivariantness for the three horizontal lines.
\end{proof}

\vspace{0.5cm}

The previous lemma shows how the restricted twisted representations behave once the are
tensored, but we still need to show what is the appropriate decomposition for the twisted K-theory
on which we can use lemma \ref{lemma tensor product restrictred reps}.

Let $X$ be a $G$-CW complex and $\beta: G \times G \to S^1$  a 2-cocycle representing the twisting class.
Denote $X^g$ the fixed point set of $g$ and consider the following homomorphisms
\begin{eqnarray*}
\tK{\beta}_G^*(X) \to \tK{res(\beta)}_{\langle g \rangle}^*(X^g)
\stackrel{\cong}{\to} K^*(X^g) \otimes R_{res(\beta)}(\langle g
\rangle) \to  K^*(X^g) \otimes \rational(\zeta)_\beta
\end{eqnarray*}
where the first one is the restriction map, the second is the natural isomorphism given by the trivial action
of $g$ in $X^g$ and the last one is given by the composition $\chi_g \circ A_{\beta, g} $. The composition of these
homomorphisms has image in the invariants under the $G$ action, then putting all these maps together for all the cyclic
subgroups of $G$ we get the following result

\begin{theorem}[Decomposition for twisted K-theory] \label{decomposition formula}
Let $X$ be a $G$-CW complex, $G$ a finite group and $\beta : G \times G \to S^1$ a 2-cocycle. Let $T$ be a subset
of $G$ such that any cyclic group of $G$ is generated by only one element in $T$. Then we have a decomposition
$$\tK{\beta}_G^*(X) \otimes \rational \cong \prod_{g \in T} \left( K(^*X^g) \otimes \rational(\zeta_{|g|})_\beta\right)^G$$
where $\zeta_{|g|}$ is a primitive $|g|$-th root of unity.
\end{theorem}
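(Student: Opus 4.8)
The plan is to build the decomposition by assembling, for each chosen generator $g \in T$, the composite homomorphism described just before the theorem and then showing that the product of these composites is a rational isomorphism onto the $G$-invariants. First I would recall that the whole construction is a twisted refinement of the Adem--Ruan decomposition (Theorem \ref{theorem AdemRuan}), so the skeleton of the argument is identical to the proof of \cite[Thm.\ 7.4]{AdemRuan}; what is new is that every stage must carry along the cocycle $\beta$ and its inverse transgression $\tau_g\beta$. For a fixed $g$ with $C = \langle g \rangle$, I would take the composite
$$\tK{\beta}_G^*(X) \to \tK{res(\beta)}_{C}^*(X^g) \stackrel{\cong}{\to} K^*(X^g) \otimes R_{res(\beta)}(C) \stackrel{\chi_g \circ A_{\beta,g}}{\To} K^*(X^g) \otimes \rational(\zeta_{|g|})_\beta,$$
where the first arrow is restriction, the second uses that $g$ acts trivially on $X^g$ (so the $\tK{res(\beta)}_C$-theory splits off the twisted representation ring of $C$), and the third applies the isomorphism $A_{\beta,g}$ of the preceding discussion together with the trace map $\chi_g$. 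The key point, which I would invoke from equation (\ref{G equivariantness}) and the surrounding paragraphs, is that $A_{\beta,g}$ and $\chi_g$ are $G$-equivariant once $\rational(\zeta_{|g|})$ is given the twisted $G$-action by multiplication with $\tau_g\beta$; hence the image of the composite automatically lands in the $G$-invariants $\bigl(K^*(X^g)\otimes\rational(\zeta_{|g|})_\beta\bigr)^G$.

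Next I would argue that, after tensoring with $\rational$, the product of these composites over all $g\in T$ is an isomorphism. Here the strategy is to reduce to the untwisted statement. Since $H^2(C;S^1)=0$, the class $res(\beta)$ is cohomologically trivial, so $R_{res(\beta)}(C)\cong R(C)$ and $\tK{res(\beta)}_C^*(X^g)\cong K_C^*(X^g)$ by an (ordinary, non-canonical) trivialization; under such a trivialization the entire diagram degenerates into exactly the diagram appearing in Theorem \ref{theorem AdemRuan}. Thus the underlying map of $\rational$-vector spaces is the Adem--Ruan decomposition map, which is known to be a rational isomorphism. The only subtlety is that the trivialization is non-canonical, so I would be careful to package the bijection equivariantly: the twisted $G$-action recorded by the subscript $\beta$ is precisely what compensates for the ambiguity in the trivializing function $\gamma$, and Lemma \ref{lemma tensor product restrictred reps} guarantees that this bookkeeping is consistent with the ring structure. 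Concretely, I would show that the composite is natural enough that a local trivialization on each $X^g$-piece glues to the claimed global rational isomorphism.

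I expect the main obstacle to be the verification that the product map is genuinely surjective onto the $G$-invariants and not merely onto some proper invariant subspace — that is, that no information is lost by restricting to the single chosen generator $g$ of each cyclic subgroup. In the untwisted case this is handled by the Artin-induction/cyclotomic machinery behind \cite[Thm.\ 5.1]{AdemRuan}; in the twisted case I would have to check that the twisted action on $\rational(\zeta_{|g|})_\beta$ via $\tau_g\beta$ does not alter the dimension count, i.e.\ that passing to $G$-invariants interacts correctly with the Galois/Weyl-type action on the cyclotomic factors. The cleanest route is to observe that $\tau_g\beta$ is a \emph{character} of $G$ (a $1$-cocycle, as noted before the lemma), so twisting the $G$-module $\rational(\zeta_{|g|})$ by a character is an isomorphism of $G$-representations up to the fixed one-dimensional twist; hence the dimension of the invariants is unchanged from the untwisted case and the rational rank matches on both sides. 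With that rank comparison in place, injectivity plus the equality of dimensions yields the isomorphism, completing the proof.
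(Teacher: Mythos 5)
Your construction of the comparison map agrees with the paper's (it is exactly the composite $\tK{\beta}_G^*(X) \to \tK{res(\beta)}_{\langle g \rangle}^*(X^g) \to K^*(X^g)\otimes R_{res(\beta)}(\langle g \rangle) \to K^*(X^g)\otimes \rational(\zeta_{|g|})_\beta$ assembled over $g \in T$), but the step you use to prove it is an isomorphism contains a genuine error. You claim that because $\tau_g\beta$ is a character of $G$, twisting the $G$-module $\rational(\zeta_{|g|})$ by it ``does not alter the dimension count'' of the invariants. That is false: for a $G$-module $V$ and a character $\chi$, $\dim (V\otimes\chi)^G$ is the multiplicity of $\chi^{-1}$ in $V$, not the multiplicity of the trivial representation, and these differ in general (take $G=\integer/2$, $V=\rational$ trivial, $\chi$ the sign character: the invariants drop from $\rational$ to $0$). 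The paper's own computations show that this dimension drop is the whole point of the twisting: for $[*/(\integer/2)^3]$ the invariants $\left(\rational_{\alpha_g}\right)^G$ vanish for most $h$, so that $\tK{\alpha}_{st}^*([*/G])$ has rank $22$, far below the untwisted rank $64$. For the same reason your reduction to the untwisted Theorem \ref{theorem AdemRuan} cannot work as stated: the trivializations of $res(\beta)$ on the individual cyclic subgroups do not glue to a trivialization of $\beta$ on $G$ (which may be cohomologically nontrivial), so the source $\tK{\beta}_G^*(X)$ is not identified with $K_G^*(X)$ and the map cannot literally ``be'' the untwisted Adem--Ruan decomposition map; indeed the two sides have different ranks, so no rank comparison against the untwisted case can close the argument.

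The paper avoids this entirely: it deduces the statement from the already-twisted decomposition of Adem--Ruan (their Theorem 7.4, not the untwisted Theorem 5.1), verifying the isomorphism first for orbits $X=G/H$ (where both sides reduce to statements about twisted representation rings of subgroups) and then extending to general $X$ by induction on the number of $G$-cells using the Mayer--Vietoris sequence, which applies because both sides are cohomological functors and the comparison map is natural. If you want to salvage your argument, that orbit-plus-induction scheme is the piece you need; the character-twisting rank count should be discarded.
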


\begin{proof}
The isomorphism follows directly from the decomposition formula of Adem and Ruan \cite[Theorem 7.4]{AdemRuan}.

Once checks first that it holds for $X=G/H$, and then one proceeds by induction on the number of $G$ cells in $X$
with the use of the the Mayer-Vietoris exact sequence.
\end{proof}

In view of lemma (\ref{lemma tensor product restrictred reps}) we have that
\begin{theorem} \label{lemma product} The following diagram is commutative
$$\xymatrix{\tK{\beta_1}_G^*(X) \times \tK{\beta_2}_G^*(X)\ar[d] \ar[r]^\otimes & \tK{\beta_1\beta_2}_G^*(X)\ar[d] \\
\left( K^*(X^g) \otimes \rational(\zeta_{|g|})_{\beta_1}\right)^G
\times \left( K^*(X^g) \otimes
\rational(\zeta_{|g|})_{\beta_2}\right)^G \ar[r] & \left(
K^*(X^g) \otimes \rational(\zeta_{|g|})_{\beta_1\beta_2}\right)^G
}$$ where the application of the bottom line takes the pair $(E_1
\otimes p_1(\zeta), E_2
\otimes p_2(\zeta) ) $  and
maps it to $(E_1 \otimes E_2) \otimes (p_1(\zeta)p_2(\zeta))$.
\end{theorem}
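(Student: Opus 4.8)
The plan is to recognize the two vertical isomorphisms as the three-step composite written just before Theorem~\ref{decomposition formula}, and to factor the large square into three stacked squares, each of which I will check commutes with the tensor product once one records how the twistings multiply. For each fixed $g$ the vertical map is the corresponding factor of
$$\tK{\beta}_G^*(X) \To \tK{res(\beta)}_{\langle g\rangle}^*(X^g) \stackrel{\cong}{\To} K^*(X^g)\otimes R_{res(\beta)}(\langle g\rangle) \stackrel{\chi_g\circ A_{\beta,g}}{\To} K^*(X^g)\otimes\rational(\zeta)_\beta,$$
and the starting observation is that the tensor product of a $\beta_1$-twisted $G$-bundle with a $\beta_2$-twisted $G$-bundle is naturally $\beta_1\beta_2$-twisted, so the top arrow $\otimes$ is the correct product to track through each step.

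First I would treat the restriction square. Restriction from $G$ to $\langle g\rangle$ and from $X$ to $X^g$ is multiplicative: it sends $F_1\otimes F_2$ to $(F_1|_{X^g})\otimes(F_2|_{X^g})$ and carries the twisting $\beta_1\beta_2$ to $res(\beta_1)res(\beta_2)$, so this square commutes. Next comes the trivial-action splitting. Since $\langle g\rangle$ acts trivially on $X^g$, every $res(\beta)$-twisted $\langle g\rangle$-bundle decomposes canonically into summands $E\otimes\rho$ with $E\in K^*(X^g)$ and $\rho$ a $res(\beta)$-twisted representation, and this decomposition is monoidal, $(E_1\otimes\rho_1)\otimes(E_2\otimes\rho_2)\cong(E_1\otimes E_2)\otimes(\rho_1\otimes\rho_2)$; hence the middle square commutes, with the $K^*(X^g)$ factors multiplying by the ordinary tensor product and the representation factors multiplying by the product analyzed in Lemma~\ref{lemma tensor product restrictred reps}.

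The bottom square is then precisely Lemma~\ref{lemma tensor product restrictred reps} applied to the representation factor $R_{res(\beta)}(\langle g\rangle)$: that lemma already shows that $\chi_g\circ A_{\beta,g}$ sends $\rho_1\otimes\rho_2$ to the product $p_1(\zeta)p_2(\zeta)$ in $\rational(\zeta)_{\beta_1\beta_2}$. Stacking the three commuting squares shows that the $g$-component of the image of $F_1\otimes F_2$ is $(E_1\otimes E_2)\otimes(p_1(\zeta)p_2(\zeta))$, exactly the formula in the statement. It then remains to pass to the rationalized $G$-invariants, and this is automatic because Lemma~\ref{lemma tensor product restrictred reps} records that every map is $G$-equivariant and that $(h\cdot\rho_1)\otimes(h\cdot\rho_2)=h\cdot(\rho_1\otimes\rho_2)$, using $\tau_g\beta_1\beta_2(h)=\tau_g\beta_1(h)\,\tau_g\beta_2(h)$, so the tensor product of two $G$-invariant classes is again $G$-invariant. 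I expect the only genuine obstacle to be the careful bookkeeping of the twistings through the restriction and the trivial-action splitting; the representation-theoretic heart of the statement has already been isolated and proved in Lemma~\ref{lemma tensor product restrictred reps}, so what is left is an assembly of naturality statements.
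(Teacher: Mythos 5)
Your proposal is correct and takes essentially the same route as the paper: the paper states this theorem as an immediate consequence of Lemma \ref{lemma tensor product restrictred reps} (``In view of lemma \ref{lemma tensor product restrictred reps} we have that\dots''), and your three stacked squares --- multiplicativity of restriction, monoidality of the trivial-action splitting over $X^g$, and the lemma itself for the representation factor --- are exactly the bookkeeping the paper leaves implicit. The $G$-equivariance observation you invoke to pass to invariants, namely $\tau_g\beta_1\beta_2(h)=\tau_g\beta_1(h)\,\tau_g\beta_2(h)$, is likewise the one recorded in that lemma, so your argument is a fleshed-out version of the paper's proof rather than a different one.
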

Therefore the stringy product in twisted orbifold K-theory can be calculated with the use of the decomposition formula
of theorem \ref{decomposition formula}, and the fact that the stringy product structure decomposes as is shown in lemma \ref{lemma product}.

\begin{cor}
In the case that the twistings $\beta_1$ and $\beta_2$ are zero we
recover the ring isomorphism of \cite[Thm 5.1]{AdemRuan}
$$K_G^*(X) \otimes \rational \cong \prod_{g \in T} \left( K^*(X^g) \otimes \rational(\zeta_{|g|})\right)^G.$$
\end{cor}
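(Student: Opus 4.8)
The plan is to obtain the corollary as the $\beta = \beta_1 = \beta_2 = 0$ specialization of Theorems \ref{decomposition formula} and \ref{lemma product}, so that essentially nothing new has to be proved beyond matching the two ways of writing the decomposition.

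First I would record the two elementary facts that make the twisting disappear. For the trivial cocycle the central extension splits as $\widetilde{G}_0 \cong G \times S^1$, so a $0$-twisted $G$-equivariant bundle (one on which the central $S^1$ acts by scalars) is exactly the datum of an ordinary $G$-equivariant bundle; hence $\tK{0}_G^*(X) = K_G^*(X)$ as rings. Second, the $G$-action on the cyclotomic factor $\rational(\zeta_{|g|})_\beta$ is, by construction, multiplication by $\tau_g\beta(h) = \beta(h,g)\beta(g,h)^{-1}$ for $h \in G$; for $\beta$ trivial this is multiplication by $1$, so $\rational(\zeta_{|g|})_0$ carries only the residual (Galois) action already present in the untwisted decomposition of Theorem \ref{theorem AdemRuan}. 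Substituting both facts into Theorem \ref{decomposition formula} yields at once the additive isomorphism $$K_G^*(X) \otimes \rational \cong \prod_{g \in T} \left( K^*(X^g) \otimes \rational(\zeta_{|g|}) \right)^G.$$

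For the ring structure I would invoke Theorem \ref{lemma product} with $\beta_1 = \beta_2 = 0$: its top horizontal arrow is then the ordinary multiplication of $K_G^*(X)$ and its bottom arrow is the product on $\prod_{g \in T}(K^*(X^g) \otimes \rational(\zeta_{|g|}))^G$ (tensor on the K-theory factor, multiplication on the cyclotomic factor), so the displayed map is a ring homomorphism, hence a ring isomorphism.

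The only point that needs genuine care — and which I regard as the main, if modest, obstacle — is to check that this untwisted $G$-invariant expression is literally the isomorphism of \cite[Thm. 5.1]{AdemRuan}. Here I would group the factors of $\prod_{g \in T}$ into $G$-orbits of cyclic subgroups, observe that the stabilizer of the factor at $g$ acts through $Z_G(\langle g\rangle)$ on $X^g$ and through $W_G(\langle g\rangle) = N_G(\langle g\rangle)/Z_G(\langle g\rangle)$ as Galois automorphisms of $\rational(\zeta_{|g|})$, and then identify $(K^*(X^g) \otimes \rational)^{Z_G(\langle g\rangle)}$ with $K^*(X^g/Z_G(\langle g\rangle)) \otimes \rational$ by the rational acyclicity recorded in the remark following Theorem \ref{theorem AdemRuan}. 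This reconciles the form $\prod_{g \in T}(K^*(X^g) \otimes \rational(\zeta_{|g|}))^G$ with the form $\prod_{(C)}[K^*(X^C/Z_G(C)) \otimes \rational(\zeta_{|C|})]^{W_G(C)}$ of \cite{AdemRuan} and completes the identification.
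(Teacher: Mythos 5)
Your proposal is correct and follows essentially the same route as the paper: the corollary is stated there without a separate proof precisely because it is the immediate specialization of Theorem \ref{decomposition formula} and Theorem \ref{lemma product} to trivial cocycles, which is exactly what you do (noting that $\tK{0}_G^*(X)=K_G^*(X)$ and that the action by $\tau_g\beta(h)=\beta(h,g)\beta(g,h)^{-1}$ becomes trivial). Your closing paragraph reconciling the expression with the $\prod_{(C)}\left[K^*(X^C/Z_G(C))\otimes\rational(\zeta_{|C|})\right]^{W_G(C)}$ form of \cite[Thm 5.1]{AdemRuan} is a welcome extra check the paper leaves implicit; in the paper's abelian setting it simplifies, since $W_G(C)$ is trivial and the invariants reduce to $(K^*(X^g)\otimes\rational)^G\otimes\rational(\zeta_{|g|})\cong K^*(X^g/G)\otimes\rational(\zeta_{|g|})$.
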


\subsection{Examples}

We finish this chapter by showing how the decomposition formula of
theorem \ref{decomposition formula} is suited for calculating the
stringy product in twisted orbifold K-theory. Let
$G=(\integer/2)^3$ and $\alpha \in H^4(G, \integer)$ as in formula
(\ref{definition of alpha}),  we will calculate
$\tK{\alpha}_{st}^*([*/G])$ and $\tK{\alpha}_{st}^*([T^6/G])$.

Let us remark that the ring $\tK{\alpha}_{st}^*([*/G])$ has already been calculated by A. N. Duman in \cite{Duman} without tensoring
with $\rational$ by finding out the twisted representations and their products. We include here
the calculation of this ring to make use of our decomposition formula and to clarify the calculation of
$\tK{\alpha}_{st}^*([T^6/G])$.

\begin{example}
 $\tK{\alpha}^*_{st}([*/G])$

As a group we have that
$$ \tK{\alpha}_{st}^*([*/G]) = \bigoplus_{g \in G} \tK{\alpha_g}_G^*(pt) =
\bigoplus_{g \in G} R_{\alpha_g}(G)$$ where $\alpha_g$ is the
inverse transgression of $\alpha$ in $H^3(G, \integer)$ defined in formula
(\ref{transgression alpha}).

By the decomposition formula we have that
$$R_{\alpha_g}(G) \otimes \rational \cong \prod_{h \in G}
\left( \rational(\zeta_{|h|})_{\alpha_g} \right)^G = \prod_{h \in
G} \left( \rational_{\alpha_g} \right)^G$$ where $T=G$ as all the
elements generate a different cyclic group and
$\rational(\zeta_{|h|})= \rational$ because $\zeta_{|h|}$ is
either $1$ or $-1$.

Recall that the action of $k$ in
$\rational(\zeta_{|h|})_{\alpha_g}$, for $k \in G$, is given by
multiplication by $\tau_h \alpha_g(k)$, the inverse transgression of
$\alpha_g$ evaluated in $k$. Therefore, by lemma \ref{lemma double
trasngression alpha} we have that
$$V_{g,h} =: \left(\rational(\zeta_{|h|})_{\alpha_g} \right)^G =
\left\{
\begin{array}{cl}
\rational & \mbox{for} \  g=h, \ \mbox{or} \ g=0 \ \mbox{or} \ h=0
\\
0 & \mbox{otherwise}
\end{array}\right.$$
Therefore we have that for $g \neq 0$
$$R_{\alpha_g}(G) \otimes \rational \cong V_{g,1} \times V_{g,g}
\cong \rational \times \rational,$$ and as $\alpha_1 = \tau_1
\alpha=0$ we have that
$$R_{\alpha_1}(G) \otimes \rational = R(G) \otimes \rational \cong
\prod_{h \in G} \rational$$ where this last isomorphism is an
isomorphism of rings. Therefore we have that the rank of
$\tK{\alpha}_{st}^*([*/G])$ is $2 \times 7 + 8 = 22$. We point out
here that the rank of this ring was already calculated in
\cite[Example 6.2]{AdemRuanZhang}.

The product structure is easy to determine following lemma
\ref{lemma product}. Let's denote by $1_{g,h} \in \prod_{h \in G}
V_{g,h}$ the element which is $1$ in the component of $h$ and $0$
on the other components. Then we have that
$$1_{g_1,h_1} \cdot 1_{g_2,h_2 } = \left\{
\begin{array}{cl}
1_{1,h_1} & \mbox{if} \ 0=g_1g_2 \ \mbox{and} \ h_1 = h_2 \ \\
1_{g_1g_2,h_1} & \mbox{if} \ h_1 = h_2= g_1g_2 \neq 0 \ \\
 0 &\mbox{otherwise}
\end{array}
\right.$$

We finish this example by noting that the ring generated by
$\bigoplus_{g \in G} V_{g,1}$ is isomorphic to the group ring
$\rational[G]$. Therefore we have that both the group ring
$\rational[G]$ and the representation ring $R(G) \otimes
\rational$ are subrings of $\tK{\alpha}_{st}^*([*/G])$; the last
one because $R_{\alpha_1}(G)=R(G)$.

\end{example}

\begin{example} $\tK{\alpha}_{st}^*([T^6/G])$\\
Let's consider the torus $T^2= \complex/(\integer \oplus
\integer)$ together with the $\integer/2$ action given by
multiplication by $-1$. The action fixes four points $\{0,
\frac{1}{2}, \frac{1}{2}i, \frac{1}{2} + \frac{1}{2}i \}$ and we
will denote this set by $4*$. The group $\integer/2$ acts on
$K^*(T^2)$ by the trivial action on $K^0(T^2)$ and by the sign
action on $K^1(T^2)$.

Then we can consider the group $G=(\integer/2)^3$ acting
coordinate-wise on $M=T^6= T^2 \times T^2 \times T^2$; that is,
for $g=(a_1,a_2,a_3) \in G$ and $z=(z_1,z_2,z_3)\in T^6$, the
action is $g\cdot z := ((-1)^{a_1}z_1, (-1)^{a_2}z_2,
(-1)^{a_3}z_3)$.

On what follows we will explicitly calculate the rank of
$\tK{\alpha}_{st}^*([T^6/G])$ and we will show how the ring
structure behaves for some chosen elements. We will not calculate
how the ring structure behaves for all the cases because the rank
of the twisted orbifold K-theory is too big.

Let's start by calculating the rank of
$\tK{\alpha}_{st}^*([T^6/G])$.

For $g \in G$, by theorem \ref{decomposition formula} we have that
$$\tK{\alpha_g}_G^*(M^g) \otimes \rational \cong \prod_{h \in G}
(K^*(M^{g,h}) \otimes \rational_{h,\alpha_g})^G$$ where $G$ acts
on $\rational_{h,\alpha_g}=\rational$ multiplying by the double
inverse transgression $\tau_h \tau_g \alpha=\tau_h \alpha_g$. Therefore
for the elements $g \in \{(0,0,0), (1,0,0), (1,1,0), (1,1,1)\}$ we
will elaborate some tables where we will calculate the double
inverse transgressions $\tau_h \tau_g \alpha$, the fixed points $M^{g,h}$,
and the invariant set $(K^*(M^{g,h}) \otimes
\rational_{h,\alpha_g})^G$ together with its  rank.

\begin{center}
\begin{tabular}{|c|cccc|}
    \hline
    \multicolumn{5}{|c|}{$\tK{\alpha_g}_G^*(M^g) \otimes \rational$  \ \ for \ $g=(1,0,0)$} \\
    \hline
$h$ & $ \tau_h \tau_g \alpha $   &  $ M^{g,h}$  & $ (K(M^{g,h}) \otimes \rational_{h,\alpha_g})^G$  & rank  \\
    \hline
$(0,0,0)$   & $0$  & $4* \times T^2 \times T^2$ & $ K^0(4*) \otimes K^0(T^2) \otimes K^0(T^2)$  & $16$ \\
$(1,0,0)$  & $0$  & $4* \times T^2 \times T^2$ &  $ K^0(4*) \otimes K^0(T^2) \otimes K^0(T^2)$ & $16$  \\
$(0,1,0)$   & $x_3^2$ & $ 4* \times 4* \times T^2$&  $K^0(4*) \otimes K^0(4*) \otimes K^1(T^2)$ & $32$  \\
$(0,0,1)$  & $x_2^2$ &  $4* \times T^2 \times 4*$ &  $K^0(4*) \otimes K^1(T^2) \otimes K^0(4*)$ & $32$ \\
$(1,1,0)$   & $x_3^2$ & $4* \times 4* \times T^2$ &  $K^0(4*) \otimes K^0(4*) \otimes K^1(T^2)$ & $32$  \\
$(0,1,1)$   & $x_2^2+x_3^2$ & $4* \times 4* \times 4*$ & $0 $ & $0$  \\
$(1,0,1)$ & $x_2^2$ & $4* \times T^2 \times  4*$ & $K^0(4*) \otimes K^1(T^2) \times K^0(4*)$ & $32$ \\
$(1,1,1)$ & $x_2^2+ x_3^2$ & $4* \times 4* \times 4*$ & $ 0$  &  \\
\hline & & & Total rank & $ 160$ \\
\hline
\end{tabular}
\end{center}

\begin{center}
\begin{tabular}{|c|cccc|}
    \hline
    \multicolumn{5}{|c|}{$\tK{\alpha_g}_G^*(M^g) \otimes \rational$  \ \ for \ $g=(1,1,0)$} \\
    \hline
$h$ & $ \tau_h \tau_g \alpha $   &  $ M^{g,h}$  & $ (K(M^{g,h}) \otimes \rational_{h,\alpha_g})^G$  & rank  \\
    \hline
$(0,0,0)$   & $0$  & $4* \times 4* \times T^2$ & $ K^0(4*) \otimes K^0(4*) \otimes K^0(T^2)$  & $32$ \\
$(1,0,0)$  & $x_3^2$  & $4* \times 4* \times T^2$ &  $K^0(4*) \otimes K^0(4*) \otimes K^1(T^2)$ & $32$  \\
$(0,1,0)$   & $x_3^2$ & $ 4* \times 4* \times T^2$&  $K^0(4*) \otimes K^0(4*) \otimes K^1(T^2)$ & $32$  \\
$(0,0,1)$  & $x_1^2+x_2^2$ &  $4* \times 4* \times 4*$ &  $0$ & $0$ \\
$(1,1,0)$   & $0$ & $4* \times 4* \times T^2$ &  $K^0(4*) \otimes K^0(4*) \otimes K^0(T^2)$ & $32$  \\
$(0,1,1)$   & $x_1^2+x_2^2+x_3^2$ & $4* \times 4* \times 4*$ & $0 $ & $0$  \\
$(1,0,1)$ & $x_1^2+x_2^2+x_3^2$ & $4* \times 4* \times  4*$ & $0$ & $0$ \\
$(1,1,1)$ & $x_1^2+x_2^2$ & $4* \times 4* \times 4*$ & $ 0$  & 0 \\
\hline & & & Total rank & $ 128$ \\
\hline
\end{tabular}
\end{center}

\begin{center}
\begin{tabular}{|c|cccc|}
    \hline
    \multicolumn{5}{|c|}{$\tK{\alpha_g}_G^*(M^g) \otimes \rational$  \ \ for \ $g=(1,1,1)$} \\
    \hline
$h$ & $ \tau_h \tau_g \alpha $   &  $ M^{g,h}$  & $ (K(M^{g,h}) \otimes \rational_{h,\alpha_g})^G$  & rank  \\
    \hline
$(0,0,0)$   & $0$  & $4* \times 4* \times 4*$ & $ K^0(4*) \otimes K^0(4*) \otimes K^0(4*)$  & $64$ \\
$(1,0,0)$  & $x_2^2+x_3^2$  & $4* \times 4* \times 4*$ &  $0$ & $0$  \\
$(0,1,0)$   & $x_1^2+x_3^2$ & $ 4* \times 4* \times 4*$&  $0$ & $0$  \\
$(0,0,1)$  & $x_1^2+x_2^2$ &  $4* \times 4* \times 4*$ &  $0$ & $0$ \\
$(1,1,0)$   & $x_1^2+x_2^2$ & $4* \times 4* \times 4*$ &  $0$ & $0$  \\
$(0,1,1)$   & $x_2^2+x_3^2$ & $4* \times 4* \times 4*$ & $0 $ & $0$  \\
$(1,0,1)$ & $x_1^2+x_3^2$ & $4* \times 4* \times  4*$ & $0$ & $0$ \\
$(1,1,1)$ & $0$ & $4* \times 4* \times 4*$ & $ K^0(4*) \otimes K^0(4*) \otimes K^0(4*)$  & 64 \\
\hline & & & Total rank & $ 128$ \\
\hline
\end{tabular}
\end{center}
\noindent For $g=(0,0,0)$ the inverse transgression $\tau_g\alpha=0$ and
therefore we just need to calculate the invariants for each $h$
\begin{center}
\begin{tabular}{|c|ccc|}
    \hline
    \multicolumn{4}{|c|}{$K_G^*(M^g) \otimes \rational$  \ \ for \ $g=(0,0,0)$} \\
    \hline
$h$ &   $ M^{g,h}$  & $ (K(M^{g,h}) \otimes \rational)^G$  & rank  \\
    \hline
$(0,0,0)$    & $T^2 \times T^2 \times T^2$ & $ K^0(T^2) \otimes K^0(T^2) \otimes K^0(T^2)$  & $8$ \\
$(1,0,0)$    & $4* \times T^2 \times T^2$ &  $ K^0(4*) \otimes K^0(T^2) \otimes K^0(T^2)$ & $16$  \\
$(0,1,0)$    & $ T^2 \times 4* \times T^2$&  $K^0(T^2) \otimes K^0(4*) \otimes K^0(T^2)$ & $16$  \\
$(0,0,1)$  &  $T^2 \times T^2 \times 4*$ &  $K^0(T^2) \otimes K^0(T^2) \otimes K^0(4*)$ & $16$ \\
$(1,1,0)$    & $4* \times 4* \times T^2$ &  $K^0(4*) \otimes K^0(4*) \otimes K^0(T^2)$ & $32$  \\
$(0,1,1)$    & $T^2 \times 4* \times 4*$ & $K^0(T^2) \otimes K^0(4*) \otimes K^0(4*)$ & $32$  \\
$(1,0,1)$  & $4* \times T^2 \times  4*$ & $K^0(4*) \otimes K^0(T^2) \times K^0(4*)$ & $32$ \\
$(1,1,1)$  & $4* \times 4* \times 4*$ & $K^0(4*) \otimes K^1(4*) \otimes K^0(4*)$  & $64$ \\
\hline & & Total rank & $ 216$ \\
\hline
\end{tabular}
\end{center}

As the groups $\tK{\alpha_g}_G^*(M^g)$ are isomorphic for
$g\in\{(1,0,0), (0,1,0), (0,0,1)\}$ and for $g\in \{(1,1,0),
(1,0,1), (0,1,1)\}$, then we can conclude that
$$\mbox{rank} \ \tK{\alpha}_{st}^*([T^6/G]) = 216 + 3 \times 160 + 3
\times 128 + 128 = 1208.$$

 \vspace{0.5cm}

The stringy product now is easy to calculate. We just need to find
the osbtruction bundle and the pushforward maps.

First note that all the obstruction bundles are trivial. This is
because the eigenvalues of the action of any element in $G$ is $1$
or $-1$, and therefore the $a_j + b_j$ of definition
\ref{Obstruction bundle} are always smaller or equal than $1$.

The pushforwards of the stringy product can be calculated by
understanding the $\integer/2$-equivariant pushforward of the
inclusion $4* \to T^2$. Let's denote by $\eta \in
K^0_{\integer/2}(T^2)$ the trivial 1-dimensional bundle whose
action of $\integer/2$ is given by multiplication by $-1$ composed
with the geometrical action, and $H \in K^0_{\integer/2}(T^2)$ be
the pullback of the Hopf bundle under the degree 1 map $T^2 \to
S^2$ together with the pullback action of $\integer/2$. If we
consider only one point $i :* \to T^2$ the pushforward map is
$$ i_! : R(\integer/2) \to K_{\integer/2}^0(T^2),  \ \ \ \
 i_!(1)= 1-H \otimes \eta$$
 and therefore $i^*i_!(1) = 1- \xi$ where $\xi$ is the sign
 representation.

For $E \in \tK{\alpha_g}_G^*(M^g)$ and $F \in
\tK{\alpha_k}_G^*(M^k)$, by definition \ref{definition twisted
stringy product}, their stringy product is
$$E \star F :=e_{3!}(e_1^*E \otimes e_2^*F ) \in \tK{\alpha_{gk}}_G^*(M^{gk}).$$
By the decomposition formula we have that $E \mapsto \prod_h E_h$
and $F \mapsto \prod_h F_h$ and by theorem \ref{lemma product} we
have that $(e_1^*E \otimes e_2^*F)_h \cong e_1^*E_h \otimes
e_2^*F_h$. The only part where one needs to be careful is with the
behavior of the pushforward map
$$e_{3!} : \tK{\alpha_{gk}}_G^*(M^{g,k}) \to \tK{\alpha_{gk}}_G^*(M^{gk})$$
under the decomposition formula.

As there would be too many cases to consider, we will only show how to calculate the stringy
 product for $g=(1,0,0)$, $k=(1,1,0)$ and
$gk=(0,1,0)$. For the other cases one can imitate the following procedure.

First let's calculate $\tK{\alpha_{gk}}_G^*(M^{g,k})$ in order to
know which components are non zero in its decomposition:

\begin{center}
\begin{tabular}{|c|ccc|}
    \hline
    \multicolumn{4}{|c|}{$\tK{\alpha_{gk}}_G(M^{g,k}) \otimes \rational$  \ \ for \ $g=(1,0,0)$ \ and $k=(1,1,0)$} \\
    \hline
$h$ & $ \tau_h \tau_{gk} \alpha $   &  $ M^{g,k,h}$  & $ (K(M^{g,k,h}) \otimes \rational_{h,\alpha_{gk}})^G$    \\
    \hline
$(0,0,0)$   & $0$  & $4* \times 4* \times T^2$ & $ K^0(4*) \otimes K^0(4*) \otimes K^0(T^2)$   \\
$(1,0,0)$  & $x_3^2$  & $4* \times 4* \times T^2$ &  $K^0(4*) \otimes K^0(4*) \otimes K^1(T^2)$ \\
$(0,1,0)$   & $0$ & $ 4*\times 4* \times T^2$&  $K^0(4*) \otimes K^0(4*) \otimes K^0(T^2)$ \\
$(0,0,1)$  & $x_1^2$ &  $4* \times 4* \times 4*$ &  $0$ \\
$(1,1,0)$   & $x_3^2$ & $4* \times 4* \times T^2$ &  $K^0(4*) \otimes K^0(4*) \otimes K^1(T^2)$   \\
$(0,1,1)$   & $x_1^2$ & $4* \times 4* \times 4*$ & $0 $   \\
$(1,0,1)$ & $x_1^2+x_3^2$ & $4* \times 4* \times  4*$ & $0$  \\
$(1,1,1)$ & $x_1^2+x_3^2$ & $4* \times 4* \times 4*$ & $ 0$   \\
\hline
\end{tabular}
\end{center}
Now let's see how the pushforward
$$\xymatrix{
\tK{\alpha_{gk}}_G^*(M^{g,k}) \ar[rr]^{e_{3!}} &&
\tK{\alpha_{gk}}_G^*(M^{gk}) }$$ behaves on each of the nonzero
components (on the left we have $(K^*(M^{g,k,h}) \otimes
\rational_{h,\alpha_{gk}})^G$ and on the right $(K^*(M^{gk,h})
\otimes \rational_{h,\alpha_{gk}})^G$).

\vspace{0.3cm}

\noindent For $h=(0,0,0)$
$$\xymatrix{
K^0(4*) \otimes K^0(4*) \otimes K^0(T^2) \ar[rr]^{(1-H) \otimes 1\otimes 1}&& K^0(T^2) \otimes K^0(4*) \otimes K^0(T^2)\\
}$$
where the map is given by multiplying with the element $(1-H) \otimes 1\otimes 1$ (the Thom class of the inclusion).

\vspace{0.3cm}

\noindent For $h=(1,0,0)$
$$\xymatrix{
K^0(4*) \otimes K^0(4*) \otimes K^0(T^2) \ar[rr]^{2 \otimes 1 \otimes 1}&& K^0(4*) \otimes K^0(4*) \otimes K^0(T^2)\\
}$$
because $(1-H\otimes \eta) \otimes 1 \otimes 1 $ restricted to $4* \times 4* \times T^2 $ is $(1-\xi)\otimes 1 \otimes 1$, and its character
on $h=(1,0,0)$ is $(2 \otimes 1 \otimes 1)$.

\vspace{0.3cm}

\noindent For $h=(0,1,0)$
$$\xymatrix{
K^0(4*) \otimes K^0(4*) \otimes K^0(T^2) \ar[rr]^{(1-H) \otimes 1\otimes 1}&& K^0(T^2) \otimes K^0(4*) \otimes K^0(T^2)\\
}$$
because the character of $(1-H \otimes \eta) \otimes 1 \otimes 1$ in $h=
(0,1,0)$ is $1-H \otimes 1\otimes 1$.

\vspace{0.3cm}

\noindent For $h=(1,1,0)$
$$\xymatrix{
K^0(4*) \otimes K^0(4*) \otimes K^0(T^2) \ar[rr]^{2 \otimes 1 \otimes 1}&& K^0(4*) \otimes K^0(4*) \otimes K^0(T^2)\\
}$$
because the character of $(1 - \xi) \otimes 1 \otimes 1$ in $h=(1,1,0)$ is $2 \otimes 1 \otimes 1$.

\vspace{0.3cm}

The decomposition for the stringy product $E \star F$ can be now easily calculated. Write $E_h:=(E^1_h, E^2_h,E^3_h)$
 and $F_h=(F^1_h, F^2_h,F^3_h)$
in terms of Kunneth decomposition as in the tables above.  For example, for  $h=(1,1,0)$ we have that
 $ (E^1_h, E^2_h,E^3_h) \in K^0(4*) \otimes K^0(4*) \otimes K^1(T^2)$. If we call the inclusion $j: 4* \to T^2$, then we have that
 \begin{eqnarray*}
 (E \star F)_h = \left\{
\begin{array}{ll}
 \left((1-H) \otimes E^1_h \otimes F^1_h\right) \otimes \left( j^*E^2_h \otimes F^2_h \right) \otimes \left(E^3_h \otimes F^3_h \right) &
 \mbox{for} \ h=(0,0,0)\\
 \left( 2 E^1_h \otimes F^1_h\right) \otimes \left( j^*E^2_h \otimes F^2_h \right) \otimes \left(E^3_h \otimes F^3_h \right) &
 \mbox{for} \ h=(1,0,0)\\
 \left((1-H) \otimes E^1_h \otimes F^1_h\right) \otimes \left( E^2_h \otimes F^2_h \right) \otimes \left(E^3_h \otimes F^3_h \right) &
 \mbox{for} \ h=(0,1,0)\\
 \left(2 E^1_h \otimes F^1_h\right) \otimes \left( E^2_h \otimes F^2_h \right) \otimes \left(E^3_h \otimes F^3_h \right) &
 \mbox{for} \ h=(1,1,0)\\
 0 & \mbox{otherwise}
\end{array}
 \right.
 \end{eqnarray*}

\end{example}

\bibliographystyle{alpha}
\bibliography{StringyK-theory}

\begin{thebibliography}{JKK07}

\bibitem[AR03]{AdemRuan}
A.~Adem and Y.~Ruan.
\newblock Twisted orbifold {$K$}-theory.
\newblock {\em Comm. Math. Phys.}, 237(3):533--556, 2003.

\bibitem[ARZ]{AdemRuanZhang}
A.~Adem, Y.~Ruan, and B.~Zhang.
\newblock A stringy product on twisted orbifold k-theory.
\newblock arxiv:math.AT/0605534.

\bibitem[AS68]{AtiyahSinger3}
M.~F. Atiyah and I.~M. Singer.
\newblock The index of elliptic operators. {III}.
\newblock {\em Ann. of Math. (2)}, 87:546--604, 1968.

\bibitem[AS69]{AtiyahSegal_completion}
M.~F. Atiyah and G.~B. Segal.
\newblock Equivariant {$K$}-theory and completion.
\newblock {\em J. Differential Geometry}, 3:1--18, 1969.

\bibitem[CH06]{ChenHu}
B.~Chen and S.~Hu.
\newblock A de{R}ham model for {C}hen-{R}uan cohomology ring of abelian
  orbifolds.
\newblock {\em Math. Ann.}, 336(1):51--71, 2006.

\bibitem[CR04]{ChenRuan}
W.~Chen and Y.~Ruan.
\newblock A new cohomology theory of orbifold.
\newblock {\em Comm. Math. Phys.}, 248(1):1--31, 2004.

\bibitem[Dum]{Duman}
Ali~Nabi Duman.
\newblock An example of a twisted fusion algebra.
\newblock Preprint.

\bibitem[JKK07]{JarvisKaufmannKimura}
Tyler~J. Jarvis, Ralph Kaufmann, and Takashi Kimura.
\newblock Stringy {$K$}-theory and the {C}hern character.
\newblock {\em Invent. Math.}, 168(1):23--81, 2007.

\bibitem[LO01]{LueckOliver}
W.~L{\"u}ck and B.~Oliver.
\newblock Chern characters for the equivariant {$K$}-theory of proper
  {$G$}-{CW}-complexes.
\newblock In {\em Cohomological methods in homotopy theory (Bellaterra, 1998)},
  volume 196 of {\em Progr. Math.}, pages 217--247. Birkh\"auser, Basel, 2001.

\bibitem[LU02]{LupercioUribeLoopGroupoid}
E.~Lupercio and B.~Uribe.
\newblock Loop groupoids, gerbes, and twisted sectors on orbifolds.
\newblock In {\em Orbifolds in mathematics and physics (Madison, WI, 2001)},
  volume 310 of {\em Contemp. Math.}, pages 163--184. Amer. Math. Soc.,
  Providence, RI, 2002.

\bibitem[Moe02]{Moerdijk2002}
I.~Moerdijk.
\newblock Orbifolds as groupoids: an introduction.
\newblock In {\em Orbifolds in mathematics and physics (Madison, WI, 2001)},
  volume 310 of {\em Contemp. Math.}, pages 205--222. Amer. Math. Soc.,
  Providence, RI, 2002.

\bibitem[Qui71]{Quillen}
D.~Quillen.
\newblock Elementary proofs of some results of cobordism theory using
  {S}teenrod operations.
\newblock {\em Advances in Math.}, 7:29--56 (1971), 1971.

\bibitem[Seg68]{Segal_K-theory}
G.~Segal.
\newblock Equivariant {$K$}-theory.
\newblock {\em Inst. Hautes \'Etudes Sci. Publ. Math.}, (34):129--151, 1968.

\end{thebibliography}
\end{document}